\documentclass[11pt, a4paper]{amsart}
\usepackage{cgeometry}
\newcommand{\im}{\operatorname{Im}}
\newcommand{\Jet}{\operatorname{Jet}}
\newcommand{\rmax}{\operatorname{max}}
\newcommand{\rk}{\operatorname{rk}}

\title{Analytic Bertini theorem II --- The local case}
\author{Mingchen Xia}
\date{}

\begin{document}
\begin{abstract}
  We prove the local analytic Bertini theorem, confirming a conjecture of Boucksom in full generality.
\end{abstract}
\maketitle
 \setcounter{tocdepth}{1}
\tableofcontents

\section{Introduction}
For a non-negative integer $k$, write $\Delta^k$ for the unit polydisc of dimension $k$. 
The main result of the paper is the following:
\begin{theorem}[Local analytic Bertini theorem]\label{thm:main}
Let $m,n$ be non-negative integers and
\begin{equation}\label{eq:X_Phi}
 X=\Delta_z^m\times\Delta_\eta^n, \qquad \Phi\in\PSH(X).
\end{equation}
For $\eta\in\Delta_\eta^n$, put $X_\eta\coloneqq \Delta_z^m\times\{\eta\}$.
There is a pluripolar set $P\subseteq\Delta_\eta^n$ such that, for every $\eta\notin P$, the multiplier ideal sheaves satisfy
\begin{equation}\label{eq:abt}
 \mathcal{I}_X(\Phi)\cdot\mathcal{O}_{X_\eta}
   =\mathcal{I}_{X_\eta}\left(\Phi|_{X_\eta}\right)
\end{equation}
as ideal sheaves on $X_\eta$.
\end{theorem}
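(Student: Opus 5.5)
Both inclusions in \eqref{eq:abt} are proved separately, and the plan below treats them in that order. The inclusion $\mathcal{I}_X(\Phi)\cdot\mathcal{O}_{X_\eta}\subseteq\mathcal{I}_{X_\eta}(\Phi|_{X_\eta})$ is the deep one. The reverse inclusion $\supseteq$ should follow from the Ohsawa--Takegoshi extension theorem applied to the submanifold $X_\eta$. Given a germ $f\in\mathcal{I}_{X_\eta}(\Phi|_{X_\eta})_{(z,\eta)}$ with $\int|f|^2e^{-2\Phi(\cdot,\eta)}<\infty$ on a small polydisc, Ohsawa--Takegoshi produces $F$ on $\Delta^m\times\Delta^n$ with $F|_{X_\eta}=f$ and $\int|F|^2e^{-2\Phi}\le C\int|f|^2e^{-2\Phi(\cdot,\eta)}$. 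Hence $f\in \mathcal{I}_X(\Phi)\cdot\mathcal{O}_{X_\eta}$. This works for every $\eta$ with $\Phi(\cdot,\eta)\not\equiv-\infty$, and the set of $\eta$ where $\Phi(\cdot,\eta)\equiv-\infty$ is pluripolar.

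For $\subseteq$, I first reduce to a countable problem. By strong Noetherianity and coherence (Nadel), on a slightly shrunk polydisc $\mathcal{I}_X(\Phi)$ is generated by finitely many holomorphic $F_1,\dots,F_N$ with $\int_X|F_j|^2e^{-2\Phi}<\infty$. It then suffices to show that, outside a pluripolar set, each restriction $F_j(\cdot,\eta)$ is locally $L^2$ with respect to $e^{-2\Phi(\cdot,\eta)}$ at every point of $X_\eta$. Fubini gives $\int_{X_\eta}|F_j|^2e^{-2\Phi}<\infty$ only for Lebesgue-almost every $\eta$, and a Lebesgue-null set is far from pluripolar. The key is therefore a log-plurisubharmonicity statement in the style of Berndtsson. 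The function
\[
 \eta\mapsto \log\int_{\Delta^m_z}|F(z,\eta)|^2e^{-2\Phi(z,\eta)}\,d\lambda(z)
\]
(or rather $-\log$ of the associated Bergman-type quantity) should behave plurisubharmonically. Concretely, I would use Berndtsson's positivity of direct images: the fibrewise Bergman kernel $\log K_\eta(z)$ for the weight $\Phi(\cdot,\eta)$ on a pseudoconvex domain is psh in $(z,\eta)$. The set of $\eta$ where a fixed $F$ fails to lie in $L^2$ is then captured by a psh function taking the value $-\infty$. More precisely, I want
\[
 u(\eta)=-\log \sup\Bigl\{\,|a|^2 \;:\; \textstyle\int_{\Delta^m}|a F(z,\eta)+ g|^2 e^{-2\Phi}\le 1 \Bigr\},
\]
with $g$ ranging over a suitable correction space, to be psh in $\eta$ and finite at a.e.\ $\eta$. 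Then $P_j=\{u=-\infty\}$ is pluripolar.

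Next I handle the localisation. Membership in a multiplier ideal is local at points of $X_\eta$, whereas the Bergman argument is on the whole fibre. I would cover by countably many smaller polydiscs $\Delta^m_{z,k}$, which are relatively compact, and use the strong openness theorem (Guan--Zhou) and the version of the problem for $\Phi$ with slightly increased weight $(1+\epsilon)\Phi$. This lets integrability on interior polydiscs imply membership in the ideal at every point. The pluripolar set $P$ is then the countable union of the exceptional sets, together with $\{\eta:\Phi(\cdot,\eta)\equiv-\infty\}$, and is still pluripolar.

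I expect the main obstacle to be this step: proving that the exceptional set of $\eta$ where a global $L^2$ section fails to restrict to an $L^2$ section is pluripolar rather than merely null. A naive fibre integral $\eta\mapsto\int|F|^2e^{-2\Phi}$ is not log-psh in general (Berndtsson's theorem concerns the dual norm), so one must phrase it through the dual/Bergman quantity. One must also check that $u\not\equiv-\infty$ using the a.e.\ finiteness from Fubini. If the direct Bergman approach fails because of the local nature of the ideals, I would fall back to approximating $\Phi$ by Demailly's Bergman-kernel approximations $\Phi_k$ with analytic singularities. For those, a generic restriction statement holds outside a proper analytic (hence pluripolar) subset by resolution of singularities. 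I would then pass to the limit using strong openness, taking a countable union of the exceptional sets.
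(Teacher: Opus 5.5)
There is a genuine gap, and it sits exactly at the step you flag as the main obstacle. The outer structure is sound: Ohsawa--Takegoshi for $\supseteq$, Fubini for almost every $\eta$, a Berndtsson-type positivity with the conull good set excluding $\equiv-\infty$, and countable unions. Nothing in the plan, however, makes the exceptional set pluripolar.

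Your function $u$ is never constructed, since the ``correction space'' is unspecified. As written it equals $+\infty$ on the bad set: if $F_\eta\notin L^2$ and the corrections are square-integrable, only $a=0$ is admissible, so the supremum is $0$. More fundamentally, a fixed $F$ does not define a section of the field $\eta\mapsto A^2(D,\mathrm{e}^{-\Phi_\eta})$ at bad parameters. The only local positivity available concerns dual norms of evaluation or derivative functionals (Berndtsson's Bergman kernel, Bao--Guan--Yuan). These give functions psh \emph{jointly} in $(x,\eta)$. In the paper they are the detectors $U_{N,\rho}$, built from wedges of jet functionals. Their polar locus inside the open set $\Omega_{N,\rho}$ is where the jets of $I_\eta$ have smaller rank than those of $J_\eta$, and Krull's theorem reduces stalk equality to this.

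That polar locus is pluripolar in $D\times G$, but the exceptional parameters are its \emph{projection} to $G$. Projections of pluripolar sets need not be pluripolar (think of $\{x=0\}\times G$). Bridging this is the paper's main new ingredient, \cref{thm:plurifine-zero-set} and \cref{cor:projected-zero-set}:
\begin{itemize}
\item On plurifine open sets where $\min_{\partial B}\|F_\eta\|$ is bounded below, $U_\eta$ is glued into an $\omega$-psh function on $\mathbb{P}^m$.
\item Berndtsson's positivity of direct images with compact fibres makes $\chi(\eta)=-\log\det q_\eta$ an $\mathcal{F}$-psh function.
\item A zero of $F_\eta$ in $B$ makes $\mathrm{e}^{-mU_\eta}$ non-integrable, so $\chi(\eta)=-\infty$; the conull good set rules out $\chi\equiv-\infty$.
\item Positive-dimensional zero loci are handled with auxiliary coordinates and a pluripolar Fubini lemma.
\end{itemize}

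Your fallback also fails, because restriction to a fibre does not commute with Demailly approximation. Take $m=n=1$ and $\Phi=\log(|z|^2+|\eta|)$. Then $\mathrm{e}^{-\Phi}$ is integrable near the origin, so $J_0=\mathcal{O}$ there. Meanwhile $\Phi_0=\log|z|^2$ gives $I_0=(z)$ at $z=0$, so $\eta=0$ is exceptional.

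The weight is toric, and a monomial computation shows that $z^a\eta^b\in A^2(\mathrm{e}^{-k\Phi})$ if and only if $a+2b\ge k-2$. Hence for $\Phi_k=\frac1k\log\sum_l|\sigma_l|^2$, with $(\sigma_l)$ an orthonormal basis of $A^2(\mathrm{e}^{-k\Phi})$, one gets $\Phi_k|_{X_0}=\frac{k-2}{k}\log|z|^2+O(1)$ near $z=0$. Therefore $\mathcal{I}(\Phi_k|_{X_0})=\mathcal{O}=\mathcal{I}(\Phi_k)\cdot\mathcal{O}_{X_0}$, so $\eta=0$ is exceptional for no $\Phi_k$.

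No limit over the approximants' exceptional sets can detect such fibres. The bound $\Phi_k\ge\Phi-C/k$ only yields $\mathcal{I}(\Phi_\eta)\subseteq\mathcal{I}(\Phi_k|_{X_\eta})$, which is the wrong inclusion. Strong openness on the total space says nothing about the fibre.
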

Here and throughout, $\mathcal{I}_X(\Phi)\cdot\mathcal{O}_{X_\eta}$ denotes the image ideal in $\mathcal{O}_{X_\eta}$. 

The inclusion $\supseteq$ in \eqref{eq:abt} is a simple consequence of the Ohsawa--Takegoshi extension theorem. Furthermore, Fubini's theorem shows that the exceptional set of $\eta$ is Lebesgue null; related restriction results were obtained by Fujino--Matsumura and Meng--Zhou \cite{FM21,MZ23}. 
Boucksom asked whether the exceptional set can be chosen pluripolar. Previously, for a projective fibration, the author confirmed this conjecture in \cite{XiaBer}. The proof relies on the technique of positivity of direct images of Berndtsson \cite{Bern09,HPS18} and does not generalize immediately to the local setting.

In the local setting, the main difficulty is that the direct image of $\mathcal{I}(\Phi)$ on $\Delta^n_{\eta}$ does not admit any natural geometric structure\footnote{It is a quasi-coherent sheaf, once the language is properly set up, but this does not seem very useful for the present problem.}, due to the non-compactness of the fibers. We shall replace the
direct image argument used in the global setting by a jet bundle argument, and Berndtsson's positivity theorem by the log-plurisubharmonicity theorem for fiberwise weighted Bergman kernels due to Bao--Guan--Yuan \cite{BGY23variant}, extending Berndtsson's theorem \cite{Ber06}. The idea of the proof is very close to the global setting, though the technical details are much more difficult.

As an intermediate step, we prove \cref{thm:plurifine-zero-set,cor:projected-zero-set}, which upgrade a Lebesgue null set to a pluripolar set. Their proofs rely crucially on the fine pluripotential theory developed by El Kadiri, Fuglede, Wiegerinck, El Marzguioui, and others. It is also interesting to observe that although the final result \cref{thm:main} is in the local setting, the proof of this intermediate step requires the positivity of direct images in the global setting.

\Cref{thm:main} can also be easily extended to a general fibration, as we handle in \cref{sec:manifold-bertini}.
\begin{corollary}\label{cor:manifold-bertini}
Let $f\colon Y\to Z$ be a morphism of ($\sigma$-compact) complex manifolds and let $\Phi\in\PSH(Y)$. There is a (locally) pluripolar set $P\subseteq Z$ such that, for every $z\in Z\setminus P$, the fiber $Y_z\coloneqq f^{-1}(z)$ is a (possibly empty) complex manifold and
\begin{equation}\label{eq:abt-manifold}
 \mathcal I_Y(\Phi)\cdot\mathcal O_{Y_z}
 =\mathcal I_{Y_z}(\Phi|_{Y_z}).
\end{equation}
\end{corollary}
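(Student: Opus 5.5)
We reduce \cref{cor:manifold-bertini} to \cref{thm:main} by working locally on $Y$ and on $Z$, and then gluing with a countable union. Both sides of \eqref{eq:abt-manifold} are coherent ideal sheaves on $Y_z$, defined locally on $Y$, so equality can be checked on an open cover of $Y$. Since $Y$ is $\sigma$-compact, any open cover has a countable subcover. A countable union of (locally) pluripolar subsets of $Z$ is again locally pluripolar, by Josefson's theorem or directly from the definition. It therefore suffices to show the following: for every $y\in Y$ there is a neighbourhood $U\ni y$ and a locally pluripolar $P_U\subseteq Z$ such that \eqref{eq:abt-manifold} holds on $U\cap Y_z$ for $z\notin P_U$.

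\textbf{Step 1: remove the critical values.} Let $C\subseteq Y$ be the closed analytic subset where $df$ fails to be surjective. By Sard's theorem, $f(C)$ has measure zero, but we need it to be pluripolar. Write $C$ as a countable union of relatively compact pieces; their images are then countable unions of images of compact analytic sets. The subtle case is when $f(C)$ is not contained in a proper analytic subset. For example, if $\dim Z>\dim Y$, then $f$ is nowhere submersive and $f(Y)$ is a countable union of images of manifolds of smaller dimension. Such images are locally contained in countable unions of proper analytic subvarieties, by the rank theorem applied on a stratification of $C$ by constant rank of $df$. On each stratum where $f$ has constant rank $r<\dim Z$, the image is locally a submanifold of dimension $r$, hence pluripolar. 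By $\sigma$-compactness, $f(C)$ is a countable union of pluripolar sets. We put $f(C)$ into $P$. For $z\notin f(C)$, the fiber $Y_z$ is either empty or a submanifold on which $f$ is a submersion near every point. This is the step I expect to need the most care: stratifying $C$ by rank and ensuring countability.

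\textbf{Step 2: local reduction at submersive points.} Let $y\in Y\setminus C$. By the submersion theorem, there are holomorphic coordinates with $U\cong \Delta_z^m\times\Delta_\eta^n$ near $y$ and $V\cong\Delta^n_\eta$ near $f(y)$, in which $f$ is the projection. After shrinking and rescaling, we apply \cref{thm:main} to $\Phi|_U$ and obtain a pluripolar $P_U'\subseteq V$ such that \eqref{eq:abt} holds on $U\cap Y_z$ for $z\in V\setminus P_U'$. Points $y\in C$ need no local statement, since their images are already excluded. Points of $Y\setminus C$ whose image lies outside $V$ contribute nothing for $z\notin V$.

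\textbf{Step 3: gluing.} Cover $Y\setminus C$ by countably many such charts $U_i$ and set $P=f(C)\cup\bigcup_i P'_{U_i}$. Each $P'_{U_i}$ is pluripolar in an open subset of $Z$, so $P$ is locally pluripolar. For $z\notin P$, every point of $Y_z$ lies in some $U_i$ with $z\in V_i\setminus P'_{U_i}$, so the two ideal sheaves agree at that point. Since both sheaves are defined stalkwise, \eqref{eq:abt-manifold} holds on all of $Y_z$.
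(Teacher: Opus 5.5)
Your proposal follows the paper's proof: you discard the critical values $f(C)$, cover $Y\setminus C$ by countably many product charts in which $f$ is a projection, apply \cref{thm:main} on each chart, and take the countable union. The only difference is how you show $f(C)$ is pluripolar: the paper gets negligibility from \cite[Lemma~V.4.4]{BS76}, using that no local section of $f$ lies in $C$, whereas you use a holomorphic-Sard stratification, which works provided the strata $S$ are chosen so that $f|_S$ (not the ambient $\mathrm df$) has constant rank, that rank being $\le\rk\mathrm df<\dim Z$ on $C$.
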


\Cref{thm:main} and \cref{cor:manifold-bertini} are the first step towards understanding the deformations of psh singularities, a project which the author wishes to develop in the near future.

\subsection*{Acknowledgments}
AI plays a key role in this project. The idea of the proof was due to the author before the AI era. The details were, however, first carried out by the \href{https://github.com/frenzymath/Rethlas}{Rethlas agent} using the OpenAI GPT-5.6-sol model. The AI-generated proofs were later simplified and largely rewritten by the author.

The author would like to thank Charles Favre for accidentally bringing the book \cite{EKF} to his attention, which turns out to be indispensable for this problem.

The author is supported by the National Key R\&D Program of China 2025YFA1018200.

\section{Preliminaries}

\subsection{Multiplier ideals and the almost-everywhere case}
Given a psh function $\varphi$ on a complex manifold $Y$, the multiplier ideal sheaf $\mathcal{I}(\varphi)=\mathcal{I}_Y(\varphi)$ consists of the holomorphic germs $f$ for which $|f|^2\mathrm{e}^{-\varphi}$ is locally integrable. By Nadel's coherence theorem, $\mathcal{I}_Y(\varphi)$ is coherent. We use the convention $\mathcal{I}_Y(-\infty)=0$.

Given a function $f(z,\eta)$ depending on two variables, we shall write $f_{\eta}$ for the function sending $z$ to $f(z,\eta)$.

Fix $m,n$, $X$ and $\Phi$ as in~\eqref{eq:X_Phi}. For each $\eta\in\Delta_\eta^n$, write
\begin{equation}\label{eq:JJetaIeta}
 J\coloneqq\mathcal{I}_X(\Phi),\qquad J_\eta\coloneqq J\cdot\mathcal{O}_{X_\eta},\qquad I_\eta\coloneqq\mathcal{I}_{X_\eta}(\Phi_\eta).
\end{equation}
The Ohsawa--Takegoshi extension theorem gives
\begin{equation}\label{eq:OT-inclusion}
 I_\eta\subseteq J_\eta.
\end{equation}

The reverse inclusion holds on almost every fiber by Fubini's theorem.
Choose a countable cover of $X$ by product charts
$U'=W'\times G'\Subset U=W\times G\Subset X$, small enough that
$J|_U$ admits finitely many generators. Their weighted squares are
integrable on $U'$. For each chart, Fubini's theorem gives a null subset
of $G'$ outside which all restricted generators belong to $I_\eta$ on
$W'$. Thus $J_\eta\subseteq I_\eta$ on $W'$. Taking the countable union
of these null sets and using~\eqref{eq:OT-inclusion} gives
\begin{equation}\label{eq:ae-good}
 G_{\mathrm{ae}}\coloneqq \left\{\eta\in\Delta_\eta^n:I_\eta=J_\eta \right\}, \qquad |\Delta_\eta^n\setminus G_{\mathrm{ae}}|=0,
\end{equation}
where $|\cdot|$ denotes Lebesgue measure. Compare~\cite{MZ23} for an analogous restriction result.

\subsection{Weighted Bergman kernels}

We use the convention $\mathbb N=\mathbb Z_{\geq0}$. For $k\in \mathbb{N}$ and $\alpha=(\alpha_1,\ldots,\alpha_k)\in\mathbb N^k$, write
\[
 |\alpha|\coloneqq \sum_{j=1}^k\alpha_j,
 \qquad
 \alpha!\coloneqq \prod_{j=1}^k\alpha_j!,
 \qquad
 \partial_u^\alpha\coloneqq
 \prod_{j=1}^k \frac{\partial^{\alpha_j}}{\partial u_j^{\alpha_j}},
 \qquad
 D_u^\alpha\coloneqq \frac{1}{\alpha!}\partial_u^\alpha.
\]
If $U\subseteq\mathbb C_u^k$ is a domain and $\varphi\in\PSH(U)$, define
\[
 A^2(U,\mathrm{e}^{-\varphi})\coloneqq
 \left\{f\in\mathcal{O}(U):
 \|f\|_{U,\varphi}^2\coloneqq
 \int_{U}|f|^2\mathrm{e}^{-\varphi}\,\mathrm dV<\infty\right\}.
\]
Here and in the sequel, $\mathrm dV$ denotes standard Lebesgue measure.

Let $q$ and $s$ be positive integers. Given finitely supported coefficients
$(c_\alpha)_{\alpha\in\mathbb N^q}$ in $\mathbb C$, put
\[
 \mathcal D\coloneqq\sum_{\alpha\in\mathbb N^q}c_\alpha D_u^\alpha.
\]

Let $V\subseteq\mathbb C_u^q$ be a bounded pseudoconvex domain, let $G\subseteq\mathbb C_\eta^s$ be a domain, and let $\psi\in\PSH(V\times G)$.
For $\eta\in G$, consider the weighted Bergman kernels:
\[
 K_{\mathcal D,V}^{\psi_\eta}(u)\coloneqq
 \sup_{0\ne f\in A^2(V,\mathrm{e}^{-\psi_\eta})}\frac{|(\mathcal Df)(u)|^2}{\int_V|f|^2\mathrm{e}^{-\psi_\eta}\,\mathrm dV},
 \qquad u\in V.
\]
If the space $A^2(V,\mathrm{e}^{-\psi_\eta})$ is trivial, we understand that $K_{\mathcal D,V}^{\psi_\eta}\equiv 0$.

We use the following special case of
\cite[Theorem~1.4]{BGY23variant}, which extends Berndtsson's theorem
\cite{Ber06}.
\begin{theorem}\label{thm:fixed-functional}
With the notations above,
\[
 \left[(u,\eta)\longmapsto\log K_{\mathcal D,V}^{\psi_\eta}(u)\right]
 \in\PSH(V\times G)\cup\{-\infty\}.
\]
\end{theorem}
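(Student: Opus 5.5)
Since the statement is presented as a special case of \cite[Theorem~1.4]{BGY23variant}, the plan is to check that our setting falls under their hypotheses. To make the reduction visible, I will also outline the direct argument in the spirit of Berndtsson~\cite{Ber06}.

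\emph{Step 1: the functional.} Fix $u_0\in V$. The map $f\mapsto(\mathcal Df)(u_0)$ is a finite linear combination of derivatives at $u_0$. By the Cauchy estimates on a polydisc around $u_0$ in $V$, it is a bounded linear functional on $A^2(V,\mathrm e^{-\psi_\eta})$; here we use that $\mathrm e^{-\psi_\eta}$ is bounded below on compacta, since $\psi$ is locally bounded above. Hence
\[
K_{\mathcal D,V}^{\psi_\eta}(u)=\sup\bigl\{|(\mathcal Df)(u)|^2 : \|f\|_{V,\psi_\eta}\le1\bigr\}
\]
is the squared norm of an evaluation-type functional. Theorem~1.4 of \cite{BGY23variant} treats exactly such fiberwise weighted Bergman kernels attached to linear functionals given by finite-order jets. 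The space $\mathcal O(V\times G)$ and the family $\psi_\eta$ satisfy their hypotheses: $V$ is bounded pseudoconvex and $\psi$ is psh on the product. It then remains only to match notation, including the degenerate case $A^2=0$, where $\log K=-\infty$.

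\emph{Step 2: the direct argument.} If one prefers a self-contained route, one proceeds in two stages. First, we show that $\eta\mapsto\log K(u,\eta)$ is psh for fixed $u$, or rather jointly in $(u,\eta)$. For this we approximate $\psi$ by smooth psh functions decreasing to it, and $V$ by smooth strongly pseudoconvex domains $V_j\uparrow V$. For smooth data, Berndtsson's positivity of the direct image bundle $E_\eta=A^2(V,\mathrm e^{-\psi_\eta})$ (Nakano positivity, via Hörmander's $L^2$ estimates) implies that $\log\|\xi_\eta\|^2_{E_\eta^*}$ is psh for any holomorphic section $\xi$ of the dual. We take $\xi_{(u,\eta)}=\mathcal D\,\mathrm{ev}_u$, which depends holomorphically on $(u,\eta)$. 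Including $u$ is handled by the standard trick of enlarging the base, that is, considering $V\times (V\times G)$ with the weight pulled back from the last factors, so that the functional varies holomorphically. Second, we pass to the limit: the kernels increase under decreasing weights and decreasing domains. So they converge monotonically to $K$, and the limit of $\log$ of decreasing, respectively increasing, families is handled by upper semicontinuous regularization together with the fact that the limit is already upper semicontinuous.

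\emph{Main obstacle.} I expect the main difficulty to be this limiting step when the weight is not smooth, together with the identification of the limit with $K$ itself rather than its regularization. That is where \cite{BGY23variant} does the work, so in the paper I would simply invoke their Theorem~1.4 after the check in Step~1.
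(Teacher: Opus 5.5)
Your proposal is correct and matches the paper, which gives no argument beyond identifying the statement as a special case of \cite[Theorem~1.4]{BGY23variant}; your Step~1 (boundedness of the jet functional, and the convention $K\equiv0$ when $A^2(V,\mathrm{e}^{-\psi_\eta})=\{0\}$) is the bookkeeping that reduction needs. One correction to your optional Step~2: with $\psi_j\downarrow\psi$ and $V_j\uparrow V$, the approximating kernels $K^{\psi_{j,\eta}}_{\mathcal D,V_j}$ \emph{decrease}, not increase (smaller weights and larger domains give larger norms), and a normal-families/Fatou argument shows their limit is exactly $K^{\psi_\eta}_{\mathcal D,V}$, so the limit step is just a decreasing limit of psh functions and needs no upper semicontinuous regularization.
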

See also~\cite{BG23,BGY22}.

\section{A pluripolar zero-set criterion}

Let $\mathcal{F}$ denote the plurifine topology, the coarsest topology making every psh function continuous. Topological notions relative to the plurifine topology will be prefixed by \emph{$\mathcal{F}$-}. In this section, we assume some familiarity with fine pluripotential theory. The results that we need are proved in several articles; see, for example, \cite{EMW06,EKFW11,EMthesis}. A good reference is the book \cite{EKF}.

Recall that a function $v\colon V\to[-\infty,\infty)$ on an $\mathcal{F}$-open set $V\subseteq \mathbb{C}^n$ is \emph{(weakly) $\mathcal{F}$-psh} if it is not identically $-\infty$ on each $\mathcal{F}$-component, it is $\mathcal{F}$-upper semicontinuous and, for every complex affine line $\Lambda$, its restriction to each fine component of $V\cap\Lambda$ is finely subharmonic or identically $-\infty$. The $\mathcal{F}$-psh functions satisfy the usual sheaf property, and if $V$ is a connected open set, they are the same as the usual notion of psh functions. See \cite[Theorem~5.4.2(i), Proposition~5.4.6]{EKF}.

\begin{lemma}\label{lma:Fopen_notnull}
Every nonempty $\mathcal{F}$-open set has positive Lebesgue measure.
\end{lemma}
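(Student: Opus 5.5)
The plan is to reduce the statement to the fact that pluripolar sets are exactly the sets that are ``thin'' in the plurifine sense. Then I will show that a Lebesgue null set is far too small to contain a nonempty $\mathcal F$-open set.

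Let $V\subseteq\mathbb C^n$ be nonempty and $\mathcal F$-open, fix $a\in V$, and suppose for contradiction that $|V|=0$. The first step uses the basic structure of the plurifine topology: it is generated by sets of the form $\{v>0\}$ with $v$ psh on a neighbourhood. Consequently $a$ has an $\mathcal F$-neighbourhood contained in $V$ of the form
\[
 O=B\cap\{v_1>c_1\}\cap\dots\cap\{v_k>c_k\},
\]
where $B$ is a Euclidean ball around $a$, each $v_j$ is psh on $B$, and $v_j(a)>c_j$. Replacing each $v_j$ by $\max(v_j-c_j,0)$ and summing, I obtain a single psh $v\geq0$ on $B$ with $v(a)>0$ and $O\supseteq B\cap\{v>0\}$. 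It would be simplest to take the finite intersection, since that is the defining subbasis. Alternatively, one can quote \cite{EKF} for the fact that a point is never an $\mathcal F$-interior point of a set that is thin at it.

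The second step uses the sub-mean value inequality. If $|O|=0$, then $v=0$ Lebesgue-almost everywhere on $B$. For a psh function,
\[
 v(a)\le \frac{1}{|B_r(a)|}\int_{B_r(a)}v\,\mathrm dV
\]
for small $r$, and the right-hand side vanishes because $v=0$ almost everywhere. This gives $v(a)\le0$, contradicting $v(a)>0$. Hence $|V|\geq|O|>0$.

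The only genuine point to check is the first step: the claim that $\mathcal F$-neighbourhoods contain sets $\{v>c\}\cap B$ with $v$ psh near $a$. The plurifine topology is defined globally, as the coarsest topology making all psh functions continuous, and one needs the psh functions to be defined on a neighbourhood of $a$ rather than on all of $\mathbb C^n$. This is standard: the topology is local, and local psh functions can be used as generators. If a global formulation is required, I would glue a psh function defined on a ball to $\max(v,\,A\log|z-a|+C)$ in order to extend it to $\mathbb C^n$ while keeping its values near $a$.

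The argument also needs upper semicontinuity in the sub-mean value step, which holds for psh functions, and the sum of the truncations $\max(v_j-c_j,0)$ remains psh. The main obstacle is therefore only bookkeeping about the local basis of $\mathcal F$. The rest is the elementary fact that psh functions are determined by their almost-everywhere values through the mean value inequality.
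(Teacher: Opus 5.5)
There is a genuine gap in your first step; the second step is fine. The function $v=\sum_j\max(v_j-c_j,0)$ is positive exactly where \emph{some} $v_j>c_j$. Hence $B\cap\{v>0\}=B\cap\bigcup_j\{v_j>c_j\}$, which \emph{contains} $O$ rather than being contained in it. Your deduction ``$|O|=0$ implies $v=0$ almost everywhere on $B$'' therefore does not follow, since the nullity of the intersection says nothing about the union. For example, with $v_1=\operatorname{Re}z_1$, $v_2=-\operatorname{Re}z_1$, $c_1=c_2=-\varepsilon$, $a=0$, the set $O$ is a thin slab while $\{v>0\}$ is everything.

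Your fallback via thinness does not close the gap as stated. To use ``a point is never an $\mathcal F$-interior point of a set thin at it'' you would need null sets to be thin. That is false: $\{\operatorname{Im}z_1=0\}$ is null but not plurithin at its points.

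The missing step is the sum trick \emph{without} truncation, localized by upper semicontinuity at $a$. Put $v=\sum_{j=1}^k v_j$ and $\delta=\min_j(v_j(a)-c_j)>0$, and shrink $B$ so that $v_j<v_j(a)+\delta/(2k)$ on $B$ for every $j$. If $z\in B$ and $v_i(z)\le c_i$ for some $i$, then
\[
 v(z)< c_i+\sum_{j\ne i}v_j(a)+\tfrac{\delta}{2}\le v(a)-\tfrac{\delta}{2}.
\]
Hence $a\in B\cap\{v>v(a)-\delta/2\}\subseteq O$. Your mean-value argument then applies to $h=v-v(a)+\delta/2$. You do not need $h\ge0$: $h\le0$ almost everywhere already gives $h(a)\le0$, a contradiction.

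This single-function neighbourhood base is exactly what the paper obtains directly from the Bedford--Taylor base theorem \cite[Theorem~5.2.8]{EKF}. After that, the paper's argument (a psh function that is $\le0$ almost everywhere is $\le0$ everywhere) coincides with your second step.
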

\begin{proof}
By the Bedford--Taylor base theorem \cite[Theorem~5.2.8]{EKF}, it suffices to show that a nonempty set of the form
\[
\{x\in B:h(x)>0\}
\]
has positive measure, where $B\subseteq \mathbb{C}^n$ is the unit ball and $h\in \PSH(B)$. If this set were null, then $h\leq 0$ almost everywhere and hence everywhere by \cite[Proposition~1.2.6]{XiaLectures}. This is a contradiction.
\end{proof}

For a polydisc $B$, let $2B$ denote the concentric polydisc with twice its polyradii. The following pluripolarity criterion is the main new ingredient in this paper:
\begin{theorem}\label{thm:plurifine-zero-set}
Let $m,n\geq1$. Let $B\subseteq\mathbb C^m$ be a polydisc and let $W\subseteq\mathbb C^m$ be open with $\overline{2B}\Subset W$. Let $G\subseteq\mathbb C^n$ be open and let $V\subseteq G$ be $\mathcal{F}$-open. For every $\eta\in G$, let $\mathcal H_\eta$ be a complex Hilbert space and $F_\eta\colon W\to\mathcal H_\eta$ a holomorphic map. Assume that
\[
 U(w,\eta)\coloneqq\log\|F_\eta(w)\|_{\mathcal H_\eta}^2 \in\PSH(W\times G),
\]
and, for some $M>0$, that
\begin{equation}\label{eq:Ulower}
 U>-M\quad\textup{on }\partial B\times V, \qquad \sup_{\overline{2B}\times V}U<\infty.
\end{equation}
If a conull set $G_0\subseteq G$ (i.e. the complement $G\setminus G_0$ is Lebesgue null) satisfies
\[
 F_\eta(w)\ne 0 \qquad(\eta\in G_0,\ w\in\overline{2B}),
\]
then
\[
 Z\coloneqq \left\{\eta\in V:F_\eta^{-1}(0)\cap B\ne\varnothing \right\}
\]
is pluripolar.
\end{theorem}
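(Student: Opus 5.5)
The plan is to produce a weakly $\mathcal F$-psh function $v$ on $V$ with $Z\subseteq\{v=-\infty\}$ and $v\not\equiv-\infty$ on each $\mathcal F$-component of $V$. The result from fine pluripotential theory that $-\infty$-sets of $\mathcal F$-psh functions are pluripolar (see \cite{EKF}) then gives the claim. A zero set is not directly visible to a plurisubharmonic quantity: $\eta\mapsto\inf_{w\in B}U(w,\eta)$ has no reason to be psh. I would therefore encode non-vanishing of $F_\eta$ on $B$ by a dual extremal problem of division (corona) type.

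\textbf{Construction of $v$.} For $\eta\in V$, set
\[
 N(\eta)\coloneqq\inf\Bigl\{\int_{B}\|g\|^2\,\mathrm dV:\ g\colon B\to\mathcal H_\eta^*\ \text{holomorphic},\ \langle g,F_\eta\rangle\equiv1 \text{ on } B\Bigr\}\in(0,\infty],
\]
and put $v(\eta)\coloneqq-\log N(\eta)$.

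\emph{Zeros force $v=-\infty$.} If $F_\eta(w_0)=0$ for some $w_0\in B$, then no such $g$ exists, so $N(\eta)=\infty$ and $v(\eta)=-\infty$. Hence $Z\subseteq\{v=-\infty\}$.

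\emph{Non-vanishing on $\overline{2B}$ gives finiteness.} Conversely, if $F_\eta$ has no zero on $\overline{2B}$ and $U$ is bounded above there, an $L^2$ division theorem (Skoda / Ohsawa--Takegoshi type, or simply $\bar\partial$-methods on the pseudoconvex polydisc $2B$) should produce a solution with finite norm. The main point is that the bound is \emph{locally uniform}. It should be controlled by $\sup_{\overline{2B}\times V}U$ together with the lower bound $U>-M$ on $\partial B\times V$. The boundary bound is what prevents zeros from escaping through $\partial B$ and keeps $v$ upper semicontinuous in the fine topology. In particular $v>-\infty$ on $G_0\cap V$.

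\textbf{Plurisubharmonicity.} The key step, and the one I expect to be the main obstacle, is that $v$ is $\mathcal F$-psh on $V$. The quantity $N(\eta)^{-1}$ is a norm on the quotient (dual) of a family of Hilbert spaces varying with $\eta$. Its log-plurisubharmonicity should follow from positivity of direct images in the style of Berndtsson \cite{Bern09}, i.e.\ Griffiths (Nakano) negativity of the dual of the direct-image bundle, once the family is realized globally. This is where I would invoke the global positivity theorem, as announced in the introduction. Concretely, I would work first on an Euclidean open set where \eqref{eq:Ulower} holds and approximate $U$ from above by smooth psh weights, so that Berndtsson's theorem applies. I would then pass to the limit along decreasing sequences. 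Finally I would use the Bedford--Taylor base theorem \cite[Theorem~5.2.8]{EKF} and the sheaf property of $\mathcal F$-psh functions to glue the local statements on the fine open set $V$. Fine upper semicontinuity should come from the uniform bounds in \eqref{eq:Ulower}.

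\textbf{Conclusion.} On each $\mathcal F$-component $V'$ of $V$, Lemma~\ref{lma:Fopen_notnull} gives $|V'|>0$, so $V'\cap G_0\neq\varnothing$ because $G_0$ is conull. Hence $v\not\equiv-\infty$ on $V'$, and $v$ is a genuine weakly $\mathcal F$-psh function. Its polar set is pluripolar, since $\mathcal F$-psh functions are locally psh up to $\mathcal F$-open modifications \cite{EKF}, and therefore $Z\subseteq\{v=-\infty\}$ is pluripolar.

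If the division-type functional turns out to be hard to control, my fallback is to replace $N$ by a Bergman-kernel-type quantity for the weight $U$ in the $w$-variables. I would then apply Theorem~\ref{thm:fixed-functional} to obtain plurisubharmonicity directly, keeping the rest of the argument unchanged.
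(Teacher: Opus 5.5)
\textbf{There is a genuine gap in the central step.} Your outer skeleton matches the paper: you pass to $\mathcal F$-components, use positive measure via Lemma~\ref{lma:Fopen_notnull}, take decreasing smooth approximants near compact sets containing $\eta_0$ in their fine interior, and conclude that the $-\infty$ locus of an $\mathcal F$-psh function is pluripolar. The problem is the function you feed into it. The function $v=-\log N$ is not plurisubharmonic; the sign is wrong.

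Look at the scalar case $\mathcal H_\eta=\mathbb C$. The constraint forces $g=1/F_\eta$. So $N(\eta)=\|1/F_\eta\|^2_{L^2(B)}$ is the squared norm of a holomorphic family of vectors in a fixed Hilbert space. Hence $\log N$ is psh, and $v$ is plurisuperharmonic. Concretely, take $m=n=1$, $B$ the unit disc, $W=\mathbb C$, $V=G$ the unit disc, $G_0=G$ and $F_\eta(w)=\mathrm e^{w\eta}$. All hypotheses hold, including \eqref{eq:Ulower} with $M=3$. Yet
\[
 N(\eta)=\int_B|\mathrm e^{-w\eta}|^2\,\mathrm dV(w)=\pi\sum_{k\ge0}\frac{|\eta|^{2k}}{(k!)^2(k+1)},\qquad v(\eta)=-\log\pi-\frac12|\eta|^2+O(|\eta|^4),
\]
which is strictly superharmonic at $0$. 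The conclusion of the theorem is trivial in this example, but your key step fails.

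Positivity of direct images does not rescue $v$. It makes norms of sections of the \emph{dual} bundle log-psh; Bergman kernels are of this type. It also makes $-\log\det$ of the $L^2$ Gram matrix psh on a \emph{finite-rank} positively curved bundle. It says nothing of the right sign about $-\log\|s\|^2$ for a single section of an infinite-rank bundle: the line spanned by $s$ is a subbundle, and subbundles of positive bundles can be negative.

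\textbf{The missing idea} is to compactify, so that the space of sections is finite-dimensional and independent of $\eta$.
\begin{itemize}
\item The paper glues $U_\eta$ on $B$ to a smooth $\omega$-psh function $h$ on $\mathbb P^m$ via $\rmax_\varepsilon(U_\eta,h)$ on $2B\setminus B$. This gluing, not a division estimate, is where both bounds in \eqref{eq:Ulower} are used.
\item It then sets $\chi(\eta)=-\log\det q_\eta$, where $q_\eta(s)=\int_{\mathbb P^m}|s|^2_{h_L^m}\mathrm e^{-m\Psi_\eta}$ on $\mathcal S=\mathrm H^0(\mathbb P^m,K_{\mathbb P^m}\otimes L^{\otimes m})$.
\item Berndtsson's theorem makes the smooth approximants $\chi_k$ psh, and $\chi_k\downarrow\chi$ gives $\mathcal F$-plurisubharmonicity.
\item A zero $w_*\in B$ of $F_\eta$ gives $\mathrm e^{-m\Psi_\eta}\gtrsim|w-w_*|^{-2m}$. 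This is non-integrable in $\mathbb C^m$, which is why the factor $m$ is needed, so $\chi(\eta)=-\infty$.
\end{itemize}

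\textbf{Your fallback does not close the gap either.} Theorem~\ref{thm:fixed-functional} produces psh functions of $(w,\eta)$. A kernel at a fixed point $w$ does not see zeros of $F_\eta$ elsewhere in $B$. So you would still have to project a pluripolar subset of $W\times G$ to $G$. That projection is exactly the difficulty this theorem is designed to overcome, and Corollary~\ref{cor:projected-zero-set} handles it only by means of this theorem.
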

For the elementary facts about Hilbert-space-valued holomorphic functions, we refer to \cite[Chapter~3]{Rud91}.

\begin{proof}
We may assume that $V\ne\varnothing$. By \cite[Corollary~5.2.21]{EKF},
$V$ has at most countably many $\mathcal{F}$-connected components, all of
which are $\mathcal{F}$-open. We may therefore work on one component at a
time and assume that $V$ is $\mathcal{F}$-connected.

\noindent\textbf{Step 1}.
Put $C\coloneqq\sup_{\overline{2B}\times V}U$.
Embed $\overline{2B}$ in the standard affine chart of $\mathbb{P}^m$, and let $\omega_{\mathrm{FS}}$ be the Fubini--Study form.
Take $h\in C^\infty(\mathbb{P}^m)$ such that
\[
  h<-M-3\quad\text{near }\partial B, \qquad h>C+3\quad\text{near }\partial(2B).
\]
Choose an integer $d_0$ large enough that $h$ is $\omega$-psh for $\omega\coloneqq d_0\omega_{\mathrm{FS}}$ and $K_{\mathbb{P}^m}\otimes\mathcal{O}_{\mathbb{P}^m}(md_0)$ is globally generated. Let $L\coloneqq\mathcal{O}_{\mathbb{P}^m}(d_0)$ with its Fubini--Study metric $h_L$ of curvature $\omega$, put
\[
 \mathcal S\coloneqq
 \mathrm{H}^0(\mathbb{P}^m,K_{\mathbb{P}^m}\otimes L^{\otimes m}),
\]
and fix a basis $s_1,\ldots,s_\mu$ of $\mathcal S$.

For $\eta\in V$, set
\[
 \Psi_\eta\coloneqq
 \begin{cases}
 U_\eta,&\text{on }B,\\
 \rmax_{\varepsilon}(U_\eta,h),&\text{on }2B\setminus B,\\
 h,&\text{on }\mathbb{P}^m\setminus2B.
\end{cases}
\]
Here $\rmax_{\varepsilon}$ denotes the usual regularized maximum.
The boundary lower bound implies that $F_\eta$ is nowhere zero on
$\partial B$, so $U_\eta$ is continuous on a neighborhood of $\partial B$.
Near $\partial B$ we have $U_\eta>h+3$, whereas near $\partial(2B)$ we have
$h>U_\eta+3$. Thus, for all sufficiently small $\varepsilon>0$,
independently of $\eta$, the regularized maximum agrees with $U_\eta$ near
$\partial B$ and with $h$ near $\partial(2B)$. The three pieces therefore
glue and $\Psi_{\eta}\in \PSH(\mathbb{P}^m,\omega)$. Fix such an
$\varepsilon$ for the rest of the proof.

\noindent\textbf{Step 2}.
Fix $\eta_0\in V$. By~\cite[Corollary~5.2.10]{EKF}, there is a compact set $K$ with
\[
 \eta_0\in \operatorname{int}_{\mathcal{F}}K,\qquad K\subseteq V.
\]
Here $\operatorname{int}_{\mathcal{F}}K$ denotes the plurifine interior of $K$.

Choose standard radial convolutions $U_k\downarrow U$, smooth and psh on a fixed neighborhood of $\overline{2B}\times K$. Then $U_k>-M>h+2$ on $\partial B\times K$. Here we have omitted the obvious pullback notation.
The compact sets
\[
 \{U_k\ge C+1\}\cap(\partial(2B)\times K)
\]
are nested with empty intersection because $U_k\downarrow U\le C$; hence one
of them is empty. After discarding finitely many terms, $U_k<C+1$ on $\partial(2B)\times K$. Therefore,
\[
 U_k>h+2 \quad \textup{on }\partial B\times K,\qquad   U_k<h-2\quad\textup{on }\partial(2B)\times K.
\]
These estimates still hold if we replace $K$ by a sufficiently small open neighborhood $O_k \Subset G$.
Let $p_1\colon\mathbb{P}^m\times O_k\to\mathbb{P}^m$ be the projection. The function
\[
 \Psi_k\coloneqq
 \begin{cases}
 U_k,&\textup{on } B\times O_k,\\
 \rmax_{\varepsilon}(U_k,h),&\textup{on } (2B \setminus B)\times O_k,\\
 h,&\textup{on }(\mathbb{P}^m\setminus 2B)\times O_k
\end{cases}
\]
is smooth and $p_1^*\omega$-psh on $\mathbb{P}^m\times O_k$. Moreover,
\[
 \Psi_k(p,\eta)\downarrow\Psi_\eta(p)\qquad((p,\eta)\in\mathbb{P}^m\times K).
\]
For any $\omega$-psh function $\Theta$ on $\mathbb{P}^m$, set
\[
 q_\Theta(s)\coloneqq \mathrm i^{m^2}\int_{\mathbb{P}^m}|s|_{h_L^m}^2 \mathrm{e}^{-m\Theta}\in[0,\infty],
 \qquad s\in\mathcal S.
\]
Here $\mathrm i^{m^2}|s|_{h_L^m}^2$ is regarded as a positive Radon measure on $\mathbb{P}^m$.

When $q_\Theta$ is finite on $\mathcal S$, let $\det q_\Theta$ denote the determinant of its Gram matrix in the fixed basis. For $\eta\in O_k$, put
\[
 q_{k,\eta}\coloneqq q_{\Psi_k(\,\cdot\,,\eta)},\qquad
 \chi_k(\eta)\coloneqq-\log\det q_{k,\eta}.
\]
The form $q_{k,\eta}$ is finite and positive definite. By~\cite[Theorem~1.2]{Bern09}, the function $\chi_k$ is psh on $O_k$.

For $\eta\in V$, put $q_\eta\coloneqq q_{\Psi_\eta}$ and
\[
 \chi(\eta)\coloneqq
 \begin{cases}
  -\log\det q_\eta,&q_\eta\text{ is finite on }\mathcal S,\\
  -\infty,&\text{otherwise}.
 \end{cases}
\]
For $\eta\in\operatorname{int}_{\mathcal{F}}K$, monotone convergence gives $q_{k,\eta}(s)\uparrow q_\eta(s)$ for every $s\in\mathcal S$. If $q_\eta$ is finite, the Gram matrices of $q_{k,\eta}$ converge to that of $q_\eta$. Otherwise one eigenvalue of the Gram matrix of $q_{k,\eta}$ tends to infinity, while $q_{k,\eta}\ge q_{1,\eta}>0$, so $\det q_{k,\eta}\to\infty$. Thus in both cases
\[
 \chi_k\downarrow\chi\qquad\text{on }\operatorname{int}_{\mathcal{F}} K.
\]
By \cite[Theorem~5.4.2(i) and (iv)]{EKF}, $\chi$ is $\mathcal{F}$-psh on
$\operatorname{int}_{\mathcal{F}}K$ or identically $-\infty$ there. The
same conclusion holds on an $\mathcal{F}$-neighborhood of every point of
$V$. By the sheaf property and the $\mathcal{F}$-connectedness of $V$,
either $\chi$ is $\mathcal{F}$-psh on $V$, or
$\chi\equiv-\infty$ on $V$.

By~\cref{lma:Fopen_notnull}, $V$ has positive Lebesgue measure. Since
$G\setminus G_0$ is null, $V\cap G_0$ is nonempty; moreover,
$V\cap G_0\subseteq V\setminus Z$. If $\eta\in V\setminus Z$, the
inequality $U_\eta>-M$ on $\partial B$ and the definition of $Z$ show that
$F_\eta$ is nowhere zero on $\overline{B}$. Thus $U_\eta$ is bounded on
$\overline{B}$, $\Psi_\eta$ is bounded on $\mathbb{P}^m$, and $q_\eta$ is
finite and positive definite. Therefore $\chi$ is finite on
$V\setminus Z$. Since $V\setminus Z\ne\varnothing$, the second alternative
above is impossible, and hence $\chi$ is $\mathcal{F}$-psh.

\noindent\textbf{Step 3}.
By~\cite[Theorem~5.4.3(d)]{EKF}, the set
$\{\eta\in V:\chi(\eta)=-\infty\}$ is pluripolar.

Take $\eta\in Z$ and $w_*\in B$ with $F_\eta(w_*)=0$. A local bound for $\nabla F_\eta$, followed by integration along line segments, gives
\[
 \|F_\eta(w)\|_{\mathcal H_\eta}\le C_*|w-w_*|,
 \qquad U_\eta(w)\le2\log|w-w_*|+O(1).
\]
Since $K_{\mathbb{P}^m}\otimes L^{\otimes m}$ is globally generated, choose
$s\in\mathcal S$ with $s(w_*)\ne0$. As $\Psi_\eta=U_\eta$ on $B$, there are
$r>0$ and $c_*>0$ such that the displayed estimate and a local trivialization
give
\[
q_\eta(s)=\mathrm i^{m^2}\int_{\mathbb{P}^m}|s|_{h_L^m}^2 \mathrm{e}^{-m\Psi_\eta}\ge c_*\int_{|w-w_*|<r}|w-w_*|^{-2m}\,\mathrm dV=\infty.
\]
Thus $Z$ is contained in $\{\eta\in V:\chi(\eta)=-\infty\}$ and hence is
pluripolar.
\end{proof}

We next record two auxiliary lemmata for the projected zero-set argument.

\begin{lemma}\label{lem:Fopen-boundary-minimum}
Let $B\subseteq\mathbb C^m$ be a polydisc and let $W\subseteq\mathbb C^m$ be
open with $\overline{2B}\Subset W$. Let $G\subseteq\mathbb C^n$ be a domain.
For every $\eta\in G$, let $\mathcal H_\eta$ be a complex Hilbert space and
let $F_\eta\colon W\to\mathcal H_\eta$ be holomorphic. Assume that
\[
 U(w,\eta)\coloneqq\log\|F_\eta(w)\|_{\mathcal H_\eta}^2
 \in\PSH(W\times G).
\]
Then, for every $r>0$, the set
\[
 V_r\coloneqq
 \left\{\eta\in G:
 \min_{w\in\partial B}\|F_\eta(w)\|_{\mathcal H_\eta}>r
 \right\}
\]
is $\mathcal{F}$-open.
\end{lemma}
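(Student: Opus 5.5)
Fix $r>0$ and $\eta_0\in V_r$; we want an $\mathcal{F}$-neighborhood of $\eta_0$ contained in $V_r$. The plan is to express the condition $\min_{\partial B}\|F_\eta\|>r$ through finitely many psh functions of $\eta$, using $U$ restricted to points of $\partial B\times G$. For fixed $w\in\partial B$, the function $\eta\mapsto U(w,\eta)$ is psh on $G$ (or $\equiv-\infty$), hence $\mathcal{F}$-continuous. So $\{\eta: U(w,\eta)>\log r^2\}$ is $\mathcal{F}$-open for each fixed $w$. The difficulty is that the condition involves all $w\in\partial B$ at once. The minimum over an uncountable family need not preserve fine openness, so we need uniformity in $w$.

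\textbf{Key step: uniform control in $w$.} Since $\eta_0\in V_r$, the function $U_{\eta_0}$ is continuous near $\partial B$ and satisfies $U_{\eta_0}\ge\log r'^2$ on $\partial B$ for some $r'>r$. I would use the holomorphic structure in $w$ to get equicontinuity. By the sub-mean value property of $U$ on polydiscs in $W\times G$, $U$ is locally bounded above near $\overline{2B}\times\{\eta_0\}$; shrink to a Euclidean neighborhood $O$ of $\eta_0$ with $\sup_{\overline{2B}\times O}U\le C$. Then $\|F_\eta\|\le e^{C/2}$ on $\overline{2B}$ for $\eta\in O$. Cauchy estimates for Hilbert-valued holomorphic maps bound $\|\nabla F_\eta\|$ uniformly on a neighborhood of $\partial B$, so the family $\{F_\eta\}_{\eta\in O}$ is uniformly Lipschitz there with some constant $L$. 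Now choose a finite $\delta$-net $w_1,\dots,w_N$ of $\partial B$ with $L\delta<(r'-r)/2$. Put
\[
 V'\coloneqq O\cap\bigcap_{j=1}^N\bigl\{\eta: U(w_j,\eta)>\log\bigl((r+r')/2\bigr)^2\bigr\}.
\]
This set is $\mathcal{F}$-open, being a finite intersection of $\mathcal{F}$-open sets, and it contains $\eta_0$. For $\eta\in V'$ and $w\in\partial B$, pick $j$ with $|w-w_j|<\delta$. Then
\[
\|F_\eta(w)\|\ge\|F_\eta(w_j)\|-L\delta>(r+r')/2-(r'-r)/2=r.
\]
Hence $V'\subseteq V_r$.

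\textbf{Main obstacle.} The delicate point is the uniform upper bound of $U$ on $\overline{2B}\times O$, which drives the uniform Lipschitz estimate. This follows because psh functions are locally bounded above and $\overline{2B}\times\{\eta_0\}$ is compact in $W\times G$, so finitely many local bounds suffice. One also needs the Cauchy estimate for Banach-valued holomorphic maps, from \cite[Chapter~3]{Rud91}. Note that the plurifine topology is only needed for the pointwise conditions in $\eta$; everything else uses Euclidean neighborhoods, which are $\mathcal{F}$-open.
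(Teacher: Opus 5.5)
Your proposal is correct and is essentially the paper's proof. It uses the same ingredients: a uniform upper bound for $U$ on $\overline{2B}\times O$ for a Euclidean neighborhood $O$ of $\eta_0$; Hilbert-valued Cauchy estimates giving equicontinuity of $\{F_\eta\}_{\eta\in O}$ on $\partial B$; a finite net $w_1,\dots,w_N\in\partial B$; and the $\mathcal{F}$-neighborhood $O\cap\bigcap_j\{U(w_j,\cdot)>c\}$ combined with the triangle inequality. Your thresholds $(r+r')/2$ and $L\delta<(r'-r)/2$ play the role of the paper's $\beta_0-\tau$ with $\tau=(\beta_0-r)/3$. Two small corrections: the local upper bound on $U$ comes from upper semicontinuity rather than the sub-mean value property, and the Lipschitz estimate should be applied along segments of length $<\delta$, which stay in the neighborhood of $\partial B$ where the gradient bound holds once $\delta$ is small.
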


\begin{proof}
Fix $r>0$ and $\eta_0\in V_r$ and put
\[
 \beta_0\coloneqq
 \min_{w\in\partial B}\|F_{\eta_0}(w)\|_{\mathcal H_{\eta_0}},
 \qquad \tau\coloneqq\frac{\beta_0-r}{3}.
\]
Choose open neighborhoods
$\partial B\subseteq A_1\Subset A_2\Subset W$. After shrinking an ordinary
neighborhood $O\Subset G$ of $\eta_0$, local upper bounds for $U$ give
a uniform bound for $\|F_\eta\|$ on $A_2$, for $\eta\in O$. The
Hilbert-valued Cauchy estimates then give a uniform bound for
$\nabla F_\eta$ on $A_1$; hence the maps $F_\eta$, $\eta\in O$, are
equicontinuous on $\partial B$. Choose a finite net
$\{w_1,\ldots,w_s\}\subseteq\partial B$ so fine that, for every $\eta\in O$
and $w\in\partial B$, some $j$ satisfies
\[
 \|F_\eta(w)-F_\eta(w_j)\|_{\mathcal H_\eta}<\tau.
\]
For each $j$, the function $U(w_j,\cdot)$ belongs to $\PSH(G)$ because it
is finite at $\eta_0$. Hence
\[
 O\cap\bigcap_{j=1}^s
 \{\eta\in G:U(w_j,\eta)>2\log(\beta_0-\tau)\}
\]
is an $\mathcal{F}$-neighborhood of $\eta_0$. On this neighborhood the
triangle inequality gives
\[
 \min_{w\in\partial B}\|F_\eta(w)\|_{\mathcal H_\eta}
 >\beta_0-2\tau>r.
\]
Thus $V_r$ is $\mathcal{F}$-open.
\end{proof}

\begin{lemma}[Pluripolar Fubini lemma]\label{lem:pluripolar-Fubini}
Let $T\subseteq\mathbb C^d$ and $D\subseteq\mathbb C^q$, $q\geq1$, be
domains, and let $E\subseteq T\times D$ be pluripolar. Then
\[
 \left\{\theta\in T:E_\theta\text{ is non-pluripolar in }D\right\},
 \qquad
 E_\theta\coloneqq\{c\in D:(\theta,c)\in E\},
\]
is pluripolar in $T$.
\end{lemma}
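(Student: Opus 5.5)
Since $E$ is pluripolar in $T\times D\subseteq\mathbb C^{d+q}$, I will first reduce to a global psh function. By Josefson's theorem, a locally pluripolar set in $\mathbb C^{d+q}$ is globally pluripolar. Hence there is $u\in\PSH(\mathbb C^{d+q})$, $u\not\equiv-\infty$, with $E\subseteq\{u=-\infty\}$. Alternatively, I can work locally: cover $T\times D$ by countably many product polydiscs $T_j\times D_j$ with $u_j\in\PSH(T_j\times D_j)$, $u_j\not\equiv-\infty$, and $E\cap(T_j\times D_j)\subseteq\{u_j=-\infty\}$. Countable unions of pluripolar sets are pluripolar, so it suffices to treat one product polydisc $T'\times D'$ with a single $u$.

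For $\theta\in T'$, put $u_\theta\coloneqq u(\theta,\cdot)$, which lies in $\PSH(D')\cup\{-\infty\}$. If $u_\theta\not\equiv-\infty$, then $E_\theta\cap D'\subseteq\{u_\theta=-\infty\}$ is pluripolar in $D'$. So the bad set is contained in
\[
 N\coloneqq\{\theta\in T':u_\theta\equiv-\infty\text{ on }D'\}.
\]
One caveat: non-pluripolarity of $E_\theta$ in $D$ must be checked across all the $D_j$. But $E_\theta$ is non-pluripolar only if $E_\theta\cap D_j$ is non-pluripolar for some $j$, since pluripolar sets are closed under countable unions. So the bad set is contained in the union over $j$ of the corresponding sets $N_j\subseteq T_j$.

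It remains to show that $N$ is pluripolar. Fix a small closed polydisc $\overline{D''}\Subset D'$ and set
\[
 v(\theta)\coloneqq\int_{D''}u(\theta,c)\,\mathrm dV(c).
\]
By upper semicontinuity and the sub-mean value property in all variables jointly, $v$ is psh on $T'$ or identically $-\infty$. It is psh because integrating a psh function over the fibre directions of a product preserves plurisubharmonicity: restrict to complex lines in $\theta$ and use Fubini with the sub-mean value inequality. Moreover, $v(\theta)=-\infty$ whenever $u_\theta\equiv-\infty$.

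Finally, I must rule out $v\equiv-\infty$. If $v\equiv-\infty$ on $T'$, then $u=-\infty$ on a set of positive Lebesgue measure in $T'\times D''$, by Fubini. But $u\in L^1_{\mathrm{loc}}$, a contradiction. Hence $N\subseteq\{v=-\infty\}$ is pluripolar.

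The main obstacle is the Josefson/localization step together with the countable-cover bookkeeping. I would handle it through the local definition, as described above, to avoid invoking Josefson at all.
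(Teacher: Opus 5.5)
Your argument is correct, and its core matches the paper's: a bad $\theta$ forces the fiber restriction of a psh function to be $\equiv-\infty$, and this is detected by the polar set of a psh function of $\theta$ alone. The execution differs. The paper applies Josefson once to get a single $\psi\in\PSH(T\times D)$ and then freezes a point. If $\psi(\theta_0,c_0)>-\infty$, then $\theta\mapsto\psi(\theta,c_0)$ is psh on $T$ and equals $-\infty$ at every bad $\theta$; no cover, averaging, or integrability argument is needed. You instead work with local functions on a countable cover by product polydiscs and average over $D''$.

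The averaging is an unneeded detour, since the slice $\theta\mapsto u(\theta,c_0)$ works equally well on $T'\times D'$. Its last step also has a slip: $v\equiv-\infty$ does not give $u=-\infty$ on a set of positive measure. What Tonelli actually gives is $\int_{T'_0\times D''}u\,\mathrm dV=\int_{T'_0}v\,\mathrm dV=-\infty$ for $T'_0\Subset T'$, which contradicts $u\in L^1_{\mathrm{loc}}$. Alternatively, note that $v(\theta_0)>-\infty$ whenever $u_{\theta_0}\not\equiv-\infty$.

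Finally, the local route does not really avoid Josefson. To conclude that $\bigcup_j N_j$ is pluripolar in $T$, you need closure of locally pluripolar sets under countable unions. You also need to shrink each $T_j\times D_j$ inside the domain of $u_j$ so that pluripolarity in $T_j$ passes to $T$. For the local notion, that closure property is itself normally derived from Josefson's theorem or from Bedford--Taylor capacity theory. Your other use of countable unions, for $E_\theta$, can be avoided: apply the local definition at a point $c$ where $E_\theta$ fails to be pluripolar, and pick $j$ with $(\theta,c)\in T_j\times D_j$. So the local version adds bookkeeping without gaining elementarity.
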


\begin{proof}
By Josefson's theorem \cite{Jos78}, there is a function $\psi\in\PSH(T\times D)$ such that $E\subseteq\{\psi=-\infty\}$. If $E_\theta$ is non-pluripolar, then $\psi(\theta,\cdot)\equiv-\infty$ on $D$.

Choose $(\theta_0,c_0)\in T\times D$ with $\psi(\theta_0,c_0)>-\infty$. The function $\theta\mapsto\psi(\theta,c_0)$ is psh on $T$. Every $\theta$ for which $E_\theta$ is non-pluripolar belongs to its polar locus, which proves the lemma.
\end{proof}

\begin{corollary}\label{cor:projected-zero-set}
Let $W\Subset W'\subseteq\mathbb C^m$ and $G\Subset G'\subseteq\mathbb C^n$, where $m,n\geq1$, $W$ and $G$ are domains, and $W',G'$ are open. For every $\eta\in G'$, let $\mathcal H_\eta$ be a complex Hilbert space and let $F_\eta\colon W'\to\mathcal H_\eta$ be holomorphic. Assume that
\[
 U(w,\eta)\coloneqq\log\|F_\eta(w)\|_{\mathcal H_\eta}^2
 \in\PSH(W'\times G')
\]
and that
\[
 Z_\eta\coloneqq F_\eta^{-1}(0)\cap W
\]
is analytic in $W$ for every $\eta\in G$. If $Z_\eta=\varnothing$ for
$\eta$ in a conull subset $G_0\subseteq G$, then
\[
 \mathcal P_U\coloneqq\{\eta\in G:Z_\eta\ne\varnothing\}
\]
is pluripolar.
\end{corollary}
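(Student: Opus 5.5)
The plan is to reduce to the case $m=1$ by slicing with complex lines, to apply \cref{thm:plurifine-zero-set} in the enlarged parameter space (point $\eta$ together with the line parameter), and then to return to $\eta$ alone by \cref{lem:pluripolar-Fubini}. One-dimensional slices are used because, in one variable, a nonempty proper zero set is discrete. Every zero can then be enclosed in a small disc $B$ whose boundary is zero-free, which is exactly the hypothesis~\eqref{eq:Ulower} with the open set $V$ supplied by \cref{lem:Fopen-boundary-minimum}.

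\textbf{Step 0 (degenerate fibers).} Fix a countable dense set $\{w_j\}\subseteq W$. Since $G_0\neq\varnothing$, $U$ is not identically $-\infty$ on $W\times G$. Each $U(w_j,\cdot)$ is psh on $G'$, and it is not identically $-\infty$ on the domain $G$: for $\eta\in G_0$ we have $U(w_j,\eta)>-\infty$. Hence $P_0\coloneqq\bigcup_j\{\eta\in G:U(w_j,\eta)=-\infty\}$ is pluripolar. If $\eta\notin P_0$, then $F_\eta$ vanishes identically on no open subset of $W$, so $Z_\eta$ is a proper analytic subset of every component of $W$ that meets it. It therefore suffices to treat $\eta\in\mathcal P_U\setminus P_0$.

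\textbf{Step 1 (slicing).} Parametrize affine lines by $c=(a,v)\in D\coloneqq\mathbb C^m\times(\mathbb C^m\setminus\{0\})$, via $\ell_c(t)=a+tv$. Set $\widetilde F_{(\eta,c)}(t)\coloneqq F_\eta(a+tv)$. Then $\widetilde U(t,\eta,c)=U(a+tv,\eta)$ is psh in $(t,\eta,c)$ wherever it is defined. Cover the relevant region by countably many data $(B,T,D_1,r)$ with the following properties.
\begin{itemize}
\item $B\subseteq\mathbb C_t$ is a disc with rational center and radius.
\item $D_1\Subset D$ and $T\Subset G'$ are rational polydiscs chosen so that $\ell_c(\overline{2B})\subseteq W'$ for $c\in D_1$, and so that $\widetilde U$ is bounded above on a neighborhood of $\overline{2B}\times T\times D_1$.
\item $r=1/k$.
\end{itemize}
\cref{lem:Fopen-boundary-minimum}, applied on the domain $T\times D_1$, shows that $V\coloneqq V_r$ is $\mathcal F$-open. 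The conull set is $(G_0\cap T)\times D_1$. For $\eta\in G_0$ and $\ell_c(\overline{2B})\subseteq W$ the map $\widetilde F$ has no zeros. To obtain this for all $c\in D_1$, I would shrink $D_1$ so that $\ell_c(\overline{2B})\subseteq W$. Alternatively, I would apply the statement with $W$ enlarged slightly, using that $Z_\eta$ is empty on a neighborhood; this compactness bookkeeping needs care. The upper bound in~\eqref{eq:Ulower} holds by construction, and the lower bound $\widetilde U>2\log r$ holds on $\partial B\times V_r$. \cref{thm:plurifine-zero-set} then gives a pluripolar set $E_{B,T,D_1,r}\subseteq T\times D_1$. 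Let $E$ be the countable union of these sets; it is pluripolar.

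\textbf{Step 2 (return to $\eta$).} Fix $\eta\in\mathcal P_U\setminus P_0$ and $w_*\in Z_\eta$. Since $Z_\eta$ has dimension at most $m-1$ near $w_*$, a generic direction $v$ gives a line through $w_*$ on which $w_*$ is an isolated zero. Openness of this condition in $c$ follows from upper semicontinuity of the fiber dimension of $\{(t,c):\widetilde F_{(\eta,c)}(t)=0\}\to D$ near $(0,c_*)$. So there is a nonempty open set $\Omega\subseteq D$ of lines $c$ meeting $Z_\eta$ in an isolated point near $w_*$. For each $c\in\Omega$, choose a rational disc $B$ containing that isolated zero with $\partial B$ zero-free. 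Choose $k$ with $\min_{\partial B}\|\widetilde F\|>1/k$, and choose rational $T\ni\eta$ and $D_1\ni c$. Then $(\eta,c)\in E$. Hence $E_\eta\supseteq\Omega$ is non-pluripolar, and \cref{lem:pluripolar-Fubini}, applied on countably many product domains covering the parameter space, shows that such $\eta$ form a pluripolar set.

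\textbf{Main obstacle.} I expect the hardest part to be Step 2's requirement that the bad lines form a set that is open, or at least non-pluripolar, in $c$, uniformly enough to be caught by countably many $(B,T,D_1,r)$. A closely related difficulty is the bookkeeping needed to keep $\ell_c(\overline{2B})$ inside $W$, so that the conull hypothesis of \cref{thm:plurifine-zero-set} holds on the whole parameter domain.
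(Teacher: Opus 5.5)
\textbf{There is a genuine gap in Step~2.} It sits at the claim that there is a nonempty open set $\Omega\subseteq D$ of lines meeting $Z_\eta$ in an isolated point near $w_*$. Upper semicontinuity of fiber dimension only shows that, for $c$ near $c_*$, the points of $\ell_c^{-1}(Z_\eta)$ near $t=0$ are \emph{at most} zero-dimensional. It says nothing about whether such points exist. They do exist when $Z_\eta$ is a hypersurface near $w_*$, by a Hurwitz or open-mapping argument. They fail to exist as soon as $\operatorname{codim}_{w_*}Z_\eta\ge2$. For example, if $m\ge2$ and $Z_\eta=\{w_*\}$, the lines meeting $Z_\eta$ form $\{(a,v):(w_*-a)\wedge v=0\}$, a proper analytic subset of $D$. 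More generally, if $\dim Z_\eta\le m-2$, these lines depend on at most $(m-2)+1+m=2m-1$ complex parameters. In these cases $E_\eta$ lies in a pluripolar subset of $D$, so \cref{lem:pluripolar-Fubini} gives nothing. Your argument therefore never handles, for instance, the $\eta$ for which $Z_\eta$ is a finite nonempty set when $m\ge2$.

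The ``uniformity'' you flag as the main obstacle is not the real issue. $E$ is a countable union, and membership $(\eta,c)\in E$ is checked pointwise. The obstruction is the dimension count. A minor point: $T$ must be taken inside $G$, not merely inside $G'$, for $(G_0\cap T)\times D_1$ to be conull.

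\textbf{The missing idea is to cut $Z_\eta$ with slices of complementary dimension rather than with lines.} This is what the paper does.
\begin{itemize}
\item Isolated zeros are caught directly by \cref{thm:plurifine-zero-set} in full dimension $m$, with no slicing. One uses rational polydiscs $B$ with $\overline{2B}\Subset W$ and the $\mathcal F$-open sets from \cref{lem:Fopen-boundary-minimum}.
\item For a regular point $p$ of $Z_\eta$ of dimension $q\ge1$, one chooses $I$ with $|I|=q$ so that $Z_\eta$ is locally a graph over the $I$-coordinates. One appends $\pi_I(w)-c$ to $F_\eta$ and applies the theorem on the enlarged base $G\times\pi_I(B)$.
\end{itemize}
The graph structure at a regular point guarantees that, for all $c$ in a nonempty open polydisc $C$, the set $Z_\eta\cap\pi_I^{-1}(c)$ meets $B$ in a single point and the boundary $\partial B$ is zero-free. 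Hence $(E_{I,B,M})_\eta\supseteq C$ is non-pluripolar, and the pluripolar Fubini lemma applies. The case $q=m$ is covered by $I=\{1,\dots,m\}$, so no separate Step~0 is needed.

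You could repair your proof along the same lines: replace lines by affine subspaces of dimension $m-q$ transversal to $Z_\eta$ at a regular point, and treat $q=0$ without slicing.
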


\begin{proof}
Call a polydisc rational if its center lies in $\mathbb Q(\mathrm i)^m$ and all its polyradii are positive rational numbers. Let $\mathscr B$ be thecountable family of rational polydiscs $B$ satisfying $\overline{2B}\Subset W$.

We first detect isolated zeros. For $B\in\mathscr B$ and
$M\in\mathbb Z_{\geq1}$, put
\[
 V^0_{B,M}\coloneqq \left\{\eta\in G:\min_{w\in\partial B}\|F_\eta(w)\|_{\mathcal H_\eta}>\mathrm e^{-M/2}\right\}
\]
and
\[
 E^0_{B,M}\coloneqq \left\{\eta\in V^0_{B,M}:Z_\eta\cap B\ne\varnothing\right\}.
\]
By \cref{lem:Fopen-boundary-minimum}, $V^0_{B,M}$ is $\mathcal{F}$-open.
Note that $U$ is bounded above on $\overline{2B}\times G$ because $\overline{2B}\times\overline G\Subset W'\times G'$, and
$U>-M$ on $\partial B\times V^0_{B,M}$. Moreover, $F_\eta$ has no zero on $\overline{2B}$ for $\eta\in G_0$. Hence \cref{thm:plurifine-zero-set} shows that every $E^0_{B,M}$ is pluripolar.

We next treat positive-dimensional zeros. For a nonempty set $I\subseteq\{1,\ldots,m\}$, let
\[
 \pi_I\colon\mathbb C^m\longrightarrow\mathbb C^{|I|}
\]
be the corresponding coordinate projection. Given $B\in\mathscr B$, set $D_{I,B}\coloneqq\pi_I(B)$. On the enlarged base $G\times D_{I,B}$ consider
\[
 H^I_{\eta,c}(w)\coloneqq \left(F_\eta(w),\pi_I(w)-c\right) \in\mathcal H_\eta\oplus\mathbb C^{|I|}
\]
and
\[
 U_I(w,\eta,c)
 \coloneqq\log\|H^I_{\eta,c}(w)\|^2  =\log\left(\mathrm e^{U(w,\eta)}+|\pi_I(w)-c|^2\right).
\]
Note that $U_I$ is psh.
For $M\in\mathbb Z_{\geq1}$, define
\[
 V_{I,B,M}\coloneqq
 \left\{(\eta,c)\in G\times D_{I,B}:
 \min_{w\in\partial B}\|H^I_{\eta,c}(w)\|>\mathrm e^{-M/2}
 \right\}
\]
and
\[
 E_{I,B,M}\coloneqq
 \left\{(\eta,c)\in V_{I,B,M}:
 Z_\eta\cap B\cap\pi_I^{-1}(c)\ne\varnothing
 \right\}.
\]
The set $V_{I,B,M}$ is $\mathcal{F}$-open by \cref{lem:Fopen-boundary-minimum}. The preceding upper bound for $U$, together with the boundedness of $D_{I,B}$, shows that $U_I$ is bounded above on $\overline{2B}\times G\times D_{I,B}$; its definition gives $U_I>-M$ on $\partial B\times V_{I,B,M}$. The set $G_0\times D_{I,B}$ is conull in the enlarged base, and $H^I_{\eta,c}$ has no zero on $\overline{2B}$ there. Thus \cref{thm:plurifine-zero-set} gives that $E_{I,B,M}$ is pluripolar in $G\times D_{I,B}$.
By \cref{lem:pluripolar-Fubini}, the set
\[
 A_{I,B,M}\coloneqq \left\{\eta\in G: (E_{I,B,M})_\eta\text{ is non-pluripolar in }D_{I,B} \right\}
\]
is pluripolar in $G$.

We claim that
\begin{equation}\label{eq:coordinate-collar-cover}
 \mathcal P_U= \bigcup_{B,M}E^0_{B,M}\cup \bigcup_{\varnothing\ne I\subseteq\{1,\ldots,m\}}\ \bigcup_{B,M}A_{I,B,M},
\end{equation}
where $B\in\mathscr B$ and $M\in\mathbb Z_{\geq1}$. The right-hand side is contained in $\mathcal P_U$ by construction. Conversely, fix $\eta\in\mathcal P_U$. Give $Z_\eta$ its reduced structure, choose $p\in(Z_\eta)_{\mathrm{reg}}$, and put $q=\dim_pZ_\eta$.

If $q=0$, then $p$ is isolated in $Z_\eta$. Choose $B\in\mathscr B$ with $p\in B$ so small that $Z_\eta\cap\overline{2B}=\{p\}$. The minimum of $\|F_\eta\|$ on $\partial B$ is positive, so $\eta\in E^0_{B,M}$ for some $M$.

Suppose that $q\geq1$. Since $Z_{\eta}$ is smooth at $p$, there is a set $I$, $|I|=q$, such that $\pi_I|_{Z_\eta}$ is biholomorphic near $p$. On a sufficiently small product
polydisc $P\Subset W$ about $p$, the set $Z_\eta\cap P$ is the graph of a
holomorphic map over the $I$-coordinates. Choose $B\in\mathscr B$ with
$p\in B$ and $\overline{2B}\Subset P$. There is a nonempty polydisc
$C\Subset D_{I,B}$ such that, for every $c\in\overline C$, the corresponding
point of the graph lies in $B$. Consequently,
\[
 H^I_{\eta,c}(w)\ne0
 \qquad((w,c)\in\partial B\times\overline C).
\]
Compactness gives a number $\delta>0$ such that
\[
 \min_{(w,c)\in\partial B\times\overline C}
 \|H^I_{\eta,c}(w)\|\geq\delta.
\]
Choose $M$ with $\mathrm e^{-M/2}<\delta$. Then
$C\subseteq(E_{I,B,M})_\eta$. Since a nonempty open set is non-pluripolar,
$\eta\in A_{I,B,M}$. This proves \eqref{eq:coordinate-collar-cover}.

There are only finitely many coordinate sets $I$ and countably many pairs
$(B,M)$. The right-hand side of \eqref{eq:coordinate-collar-cover} is
therefore pluripolar, and so is $\mathcal P_U$.
\end{proof}

\section{The local analytic Bertini theorem}\label{sec:analytic-bertini}

Fix $m,n$, $X$ and $\Phi$ as in~\eqref{eq:X_Phi}. The notations $J$, $J_{\eta}$ and $I_{\eta}$ defined in \eqref{eq:JJetaIeta} will be used.

The cases $m=0$ or $n=0$ are both trivial, we may therefore assume throughout the remainder of the proof that $m,n>0$.

\subsection{Jet bundles and notation}

All constructions in this section are local. Fix polydiscs
\[
 D\Subset\Delta_z^m,\qquad G\Subset\Delta_\eta^n.
\]
Let $N\in\mathbb N$. For $x\in D$, put
\[
 R_x\coloneqq\mathcal{O}_{D,x},\qquad \mathfrak m_x\coloneqq\{f\in R_x:f(x)=0\}.
\]
Let $\Jet^N\to D$ be the holomorphic bundle of jets of order at most $N$, characterized by
\[
 \Jet_x^N\coloneqq R_x/\mathfrak m_x^{N+1},\qquad \nu_N\coloneqq\rk\Jet^N=\binom{m+N}{m}.
\]
If $f$ is holomorphic near $x$, then
\[
 j_x^N f\coloneqq f_x\bmod\mathfrak m_x^{N+1}\in \Jet_x^N
\]
denotes its order-$N$ jet. For each multi-index $\alpha\in \mathbb{N}^m$, set
\[
 e_{\alpha,x}\coloneqq j_x^N\Bigl((z-x)^\alpha\Bigr)\in \Jet_x^N,
 \qquad
 D_x^\alpha(j_x^N f)\coloneqq \frac{1}{\alpha!}\partial_z^\alpha f(x).
\]
The vectors $(e_{\alpha,x})_{|\alpha|\le N}$ and covectors $(D_x^\alpha)_{|\alpha|\le N}$ are dual holomorphic frames of $\Jet^N$ and $(\Jet^N)^*$, respectively, and
\[
 j_x^N f=\sum_{|\alpha|\le N}D_x^\alpha f\,e_{\alpha,x},
\]
where we abbreviate $D_x^\alpha(j_x^N f)$ to $D_x^\alpha f$.

Over $D\times G$, the symbols $\Jet^N$ and $(\Jet^N)^*$ also denote their pullbacks by the vertical projection $\operatorname{pr}_z\colon D\times G\to D$.

We identify $X_\eta$ with $\Delta_z^m$ by the vertical coordinate and hence $R_x$ with $\mathcal{O}_{X_\eta,(x,\eta)}$. In particular, for the stalk ideals $I_{\eta,x}\coloneqq (I_\eta)_{(x,\eta)}$ and $J_{\eta,x}\coloneqq (J_\eta)_{(x,\eta)}$ in $R_x$, put
\[
 E_N^I(x,\eta)\coloneqq
 \frac{I_{\eta,x}+\mathfrak m_x^{N+1}}{\mathfrak m_x^{N+1}},\qquad
 E_N^J(x,\eta)\coloneqq
 \frac{J_{\eta,x}+\mathfrak m_x^{N+1}}{\mathfrak m_x^{N+1}}.
\]
By~\eqref{eq:OT-inclusion},
\[
 E_N^I(x,\eta)\subseteq E_N^J(x,\eta).
\]

For $\eta\in G$, set
\[
 H_\eta\coloneqq A^2(D,\mathrm{e}^{-\Phi_\eta})=\left\{ f\in \mathcal{O}(D): \|f\|_{H_{\eta}}^2<\infty\right\},
 \qquad
 \|f\|_{H_\eta}^2=\int_D|f|^2\mathrm{e}^{-\Phi_\eta}\,\mathrm dV.
\]
A fiber restriction $\Phi_\eta$ may be identically $-\infty$; in that case
$H_\eta=\{0\}$. Otherwise, for any $x\in D$, choose a polydisc
$B_x\Subset D$ containing $x$. Then
\begin{equation}\label{eq:weighted-unweighted}
 \|f\|_{L^2(B_x)}^2
 \le \mathrm{e}^{\sup_{B_x}\Phi_\eta}\|f\|_{H_\eta}^2,
\end{equation}
so Cauchy inequalities make every order-$N$ jet functional continuous on
$H_{\eta}$. Thus, in either case, for any $x\in D$, the coordinate
functionals
\[
 \ell_{\alpha,\eta}(x)\in H_\eta^*,\qquad \ell_{\alpha,\eta}(x)(f)\coloneqq D_x^\alpha f, \qquad |\alpha|\le N,
\]
are well defined.

For a Hilbert space $H$ and $d\ge1$, let $H^{\widehat\otimes d}$ be its completed $d$-fold tensor product and let $\widehat{\bigwedge}^{\,d}H$ be the closed alternating subspace. Let $\mathfrak S_d$ denote the permutation group of $\{1,\ldots,d\}$. For $\ell_1,\ldots,\ell_d\in H$, our normalized wedge convention is
\[
 \ell_1\wedge\cdots\wedge\ell_d
 \coloneqq \frac{1}{\sqrt{d!}}
   \sum_{\sigma\in\mathfrak S_d}\operatorname{sgn}(\sigma)
   \ell_{\sigma(1)}\otimes\cdots\otimes\ell_{\sigma(d)}.
\]
Thus
\[
 \|\ell_1\wedge\cdots\wedge\ell_d\|^2=\det(\langle\ell_i,\ell_j\rangle)_{i,j=1}^d.
\]

\subsection{Jet images of the fiber and restricted ambient ideals}

\begin{lemma}\label{lem:jet-image}
For every $\eta\in G$ and $x\in D$,
\begin{equation}\label{eq:I-jet-image}
 \im \left(j_x^N\colon H_\eta\to \Jet_x^N\right)=E_N^I(x,\eta).
\end{equation}
\end{lemma}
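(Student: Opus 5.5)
Both inclusions in \eqref{eq:I-jet-image} need a proof; the inclusion $\supseteq$ is the substantive one and rests on Hörmander $L^2$ estimates with a singular weight.

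\emph{Inclusion $\subseteq$.} Let $f\in H_\eta$. Then $|f|^2\mathrm e^{-\Phi_\eta}$ is integrable on $D$, so the germ $f_x$ lies in $\mathcal I_{X_\eta}(\Phi_\eta)_x=I_{\eta,x}$. Hence $j_x^Nf\in E_N^I(x,\eta)$. If $\Phi_\eta\equiv-\infty$, then $H_\eta=0$ and $I_\eta=0$ by convention, so both sides reduce to $0$ and there is nothing to prove.

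\emph{Inclusion $\supseteq$.} Assume $\Phi_\eta\not\equiv-\infty$. Let $g\in I_{\eta,x}$, represented on a small ball $B(x,r)\Subset D$ with $\int_{B(x,r)}|g|^2\mathrm e^{-\Phi_\eta}<\infty$. We want $f\in H_\eta$ with $f-g\in\mathfrak m_x^{N+1}$; then $j^N_xf=j^N_xg$.

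Take a cutoff $\theta$ equal to $1$ near $x$ and supported in $B(x,r)$, and solve
\[
 \bar\partial u=g\,\bar\partial\theta
\]
on the bounded pseudoconvex domain $D$ (a polydisc). Use Hörmander's estimate with the weight
\[
 \varphi\coloneqq\Phi_\eta+2(m+N)\log|z-x|+|z|^2 .
\]
The right-hand side is supported in an annulus where $\log|z-x|$ is bounded, so
\[
 \int|g\bar\partial\theta|^2\mathrm e^{-\varphi}\lesssim\int_{B(x,r)}|g|^2\mathrm e^{-\Phi_\eta}<\infty .
\]
Hence there is a solution $u$ with $\int_D|u|^2\mathrm e^{-\varphi}<\infty$.

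Set $f\coloneqq\theta g-u$. Then $f$ is holomorphic on $D$. Since $\mathrm e^{-\varphi}\ge c\,\mathrm e^{-\Phi_\eta}$ on the bounded set $D$ (the term $|z-x|^{-2(m+N)}$ is bounded below there), both $u$ and $\theta g$ lie in $L^2(D,\mathrm e^{-\Phi_\eta})$, so $f\in H_\eta$.

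Near $x$, $u=\theta g-f$ is holomorphic, and $\int|u|^2|z-x|^{-2(m+N)}\mathrm e^{-\Phi_\eta}<\infty$ there. Because $\Phi_\eta$ is locally bounded above, $\mathrm e^{-\Phi_\eta}$ is bounded below near $x$. So $\int|u|^2|z-x|^{-2(m+N)}$ is finite near $x$, which forces $u_x\in\mathfrak m_x^{N+1}$: a holomorphic function whose lowest-order homogeneous term has degree $k\le N$ has $|u|^2|z-x|^{-2m-2N}$ non-integrable, by polar coordinates. Therefore $j^N_xf=j^N_x(\theta g)=j^N_xg$. This gives every element of $E_N^I(x,\eta)$ as a jet of an element of $H_\eta$.

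The main obstacle is applying Hörmander's theorem when $\Phi_\eta$ is merely psh and possibly non-smooth. I would handle it in the standard way: Hörmander's $L^2$ theorem holds for arbitrary psh weights on pseudoconvex domains (Demailly's form), and strict positivity is supplied by the $|z|^2$ term, giving a constant depending only on $\operatorname{diam}D$. One should also double-check that the weight $2(m+N)\log|z-x|$ is exactly strong enough to force vanishing to order $N+1$ at $x$, which is the polar-coordinate computation above.
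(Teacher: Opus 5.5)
Your proof is correct, but for the substantive inclusion $\supseteq$ it takes a different route from the paper.

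The paper argues sheaf-theoretically. It uses Nadel's coherence of $I_\eta$ and applies Cartan's Theorem~B on the Stein polydisc $\Delta_z^m$ to the coherent kernel $I_\eta\cap\mathcal M_{x,N+1}$ of the map $I_\eta\to(I_\eta+\mathcal M_{x,N+1})/\mathcal M_{x,N+1}$. This lifts any jet in $E_N^I(x,\eta)$ to a global section of $I_\eta$ over all of $\Delta_z^m$. Membership in $H_\eta$ then comes for free because $\overline D\Subset\Delta_z^m$. Theorem~B on $D$ itself would only give a section that is locally, not globally, square-integrable on $D$.

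You instead build the lift directly on $D$ by solving $\bar\partial u=g\,\bar\partial\theta$ with the weight $\Phi_\eta+2(m+N)\log|z-x|+|z|^2$. The exponent $2(m+N)$ is exactly the threshold that forces $u_x\in\mathfrak m_x^{N+1}$. This is the H\"ormander step in Demailly's proof of Nadel's coherence theorem, so you are in effect inlining the analytic content that the paper imports through coherence plus Theorem~B.

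What each buys: the paper's version is shorter and needs no estimates. Yours avoids coherent sheaf theory and uses only that $D$ is bounded and pseudoconvex. It also gives $\|f\|_{H_\eta}\le C\,\|g\|_{L^2(B(x,r),\mathrm e^{-\Phi_\eta})}$ with $C$ depending on $m$, $N$, $r$, $\theta$ and $D$ but not on $\eta$, a uniformity the paper does not need.

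The analytic point you flag as the main obstacle is standard. H\"ormander's $L^2$ theorem holds for arbitrary psh weights on pseudoconvex domains, and on bounded $D$ even the version with the factor $(1+|z|^2)^{-2}$ would suffice.
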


\begin{proof}
Only surjectivity in~\eqref{eq:I-jet-image} needs proof. Let
$\mathcal M_{x,N+1}\subseteq\mathcal{O}_{\Delta_z^m}$ be the order-$N$
fat-point ideal. The exact sequence
\[
 0\longrightarrow I_\eta\cap\mathcal M_{x,N+1}
 \longrightarrow I_\eta
 \longrightarrow
 \frac{I_\eta+\mathcal M_{x,N+1}}{\mathcal M_{x,N+1}}
 \longrightarrow0
\]
has coherent kernel and a coherent rightmost term supported at $x$. Since
$\Delta_z^m$ is Stein, Cartan's Theorem B gives
$\mathrm{H}^1(\Delta_z^m,I_\eta\cap\mathcal M_{x,N+1})=0$. Hence the induced
map on global sections is surjective, and the global sections of its
rightmost term identify canonically with $E_N^I(x,\eta)$. Thus every
prescribed jet lifts to a section of $I_\eta$ on $\Delta_z^m$. Such a
section is locally square integrable with weight
$\mathrm{e}^{-\Phi_\eta}$; since $\overline D\Subset\Delta_z^m$, its
restriction belongs to $H_\eta$. This proves~\eqref{eq:I-jet-image}.
\end{proof}

Since $X$ is Stein, Cartan's Theorem A and compactness of
$\overline D\times\overline G$ give an integer $r\ge1$ and sections
$g_1,\ldots,g_r\in\Gamma(X,J)$ that generate $J$ on a neighborhood of
$\overline D\times\overline G$. For $1\le i\le r$ and $|\alpha|\le N$,
define the moving-center columns
\[
 s_{i,\alpha}(x,\eta) \coloneqq j_x^N\big((z-x)^\alpha g_i(z,\eta)\big)\in \Jet_x^N.
\]
They give a bundle morphism
\[
 \mathcal A_N\colon \mathcal{O}_{D\times G}^{\,r\nu_N}\longrightarrow\Jet^N, \qquad (c_{i,\alpha})\longmapsto\sum_{i,\alpha}c_{i,\alpha}s_{i,\alpha},
\]
whose pointwise image is
\begin{equation}\label{eq:J-jet-image}
 \im\mathcal A_N(x,\eta)=E_N^J(x,\eta)
 =\frac{J_{\eta,x}+\mathfrak m_x^{N+1}}
        {\mathfrak m_x^{N+1}}.
\end{equation}
Indeed, if $f=\sum_i b_i g_i$, then modulo $\mathfrak m_x^{N+1}$ each
holomorphic coefficient $b_i$ may be replaced by its degree-$N$ Taylor
polynomial. In normalized jet
coordinates, the matrix entries are
\[
 D_x^\beta\Bigl(s_{i,\alpha}(x,\eta)\Bigr)
 =
 \begin{cases}
  (D_z^{\beta-\alpha}g_i)(x,\eta),&\beta\ge_{\text{comp}}\alpha,\\
  0,&\text{otherwise}.
 \end{cases},
\]
where $\beta\ge_{\text{comp}}\alpha$ means componentwise inequality.
Thus $\mathcal A_N$ is holomorphic.

\subsection{The rank detector and its Bergman realization}

For $1\le\rho\le\nu_N$, put
\[
 \Omega_{N,\rho}\coloneqq  \left\{(x,\eta)\in D\times G:\dim_{\mathbb C}E_N^J(x,\eta)\ge\rho\right\}.
\]
In view of \eqref{eq:J-jet-image}, its complement is the common zero set of the $\rho\times\rho$ minors of $\mathcal A_N$, so $\Omega_{N,\rho}$ is open.

Order multi-indices lexicographically. For $1\le\rho\le\nu_N$, let
\[
 \mathfrak T_{N,\rho} \coloneqq \Bigl\{(\alpha_1,\ldots,\alpha_\rho)\in(\mathbb N^m)^\rho: |\alpha_i|\le N\ (1\le i\le\rho),\ \alpha_1<\cdots<\alpha_\rho\Bigr\}.
\]
For $\boldsymbol\alpha=(\alpha_1,\ldots,\alpha_\rho)\in\mathfrak T_{N,\rho}$ and $x\in D$, put
\[
 F_{\boldsymbol\alpha,\eta}(x) \coloneqq \ell_{\alpha_1,\eta}(x)\wedge\cdots\wedge\ell_{\alpha_\rho,\eta}(x).
\]
Using the finite Hilbert direct sum, set
\[
 \mathcal H_{N,\rho,\eta}
 \coloneqq \bigoplus_{\boldsymbol\alpha\in\mathfrak T_{N,\rho}} \widehat{\bigwedge}^{\,\rho}H_\eta^*,
 \qquad
 F_{N,\rho,\eta}(x)
 \coloneqq
 \Bigl(F_{\boldsymbol\alpha,\eta}(x)\Bigr)_{
   \boldsymbol\alpha\in\mathfrak T_{N,\rho}}
 \in \mathcal H_{N,\rho,\eta}.
\]
Set
\begin{equation}\label{eq:detector}
 U_{N,\rho}(x,\eta) \coloneqq\log\left\|F_{N,\rho,\eta}(x)\right\|_{\mathcal H_{N,\rho,\eta}}^2.
\end{equation}

\begin{proposition}\label{prop:detector}
For every $1\le\rho\le\nu_N$,
\[
 U_{N,\rho}\in\PSH(D\times G)\cup\{-\infty\},
\]
and
\begin{equation}\label{eq:detector-threshold}
 U_{N,\rho}(x,\eta)=-\infty \quad\Longleftrightarrow \quad \dim_{\mathbb C}E_N^I(x,\eta)<\rho.
\end{equation}
Consequently,
\begin{equation}\label{eq:open-rank-defect}
 \Omega_{N,\rho}\cap \left\{U_{N,\rho}=-\infty \right\} =\left\{(x,\eta): \dim_{\mathbb C}E_N^I(x,\eta)<\rho \le\dim_{\mathbb C}E_N^J(x,\eta)\right\}.
\end{equation}
For fixed $\eta\in G$, the polar locus of $U_{N,\rho}(\cdot,\eta)$ is analytic in $D$, and $F_{N,\rho,\eta}$ is holomorphic on $D$.
\end{proposition}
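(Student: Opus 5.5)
The plan is to express $\|F_{N,\rho,\eta}(x)\|^2$ as a sum of weighted Bergman kernels to which \cref{thm:fixed-functional} applies. Then we identify the vanishing of the wedge products with a rank condition via \cref{lem:jet-image}.

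\textbf{The norm as a Gram determinant.} With the normalized wedge convention,
\[
 \|F_{\boldsymbol\alpha,\eta}(x)\|^2=\det\bigl(\langle\ell_{\alpha_i,\eta}(x),\ell_{\alpha_j,\eta}(x)\rangle_{H_\eta^*}\bigr)_{i,j}.
\]
This is the squared volume of the parallelepiped spanned by the functionals $\ell_{\alpha_i,\eta}(x)$. By the dual (Cauchy--Binet type) description,
\[
 \|\ell_1\wedge\cdots\wedge\ell_\rho\|^2=\sup\frac{|\det(\ell_i(f_j))|^2}{\|f_1\wedge\cdots\wedge f_\rho\|^2},
\]
with the supremum taken over $f_1,\dots,f_\rho\in H_\eta$.

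\textbf{Plurisubharmonicity.} To land in the setting of \cref{thm:fixed-functional}, I pass to the $\rho$-fold product. Take $V=D^\rho\subseteq\mathbb C^{m\rho}_u$ and the weight $\psi(u,\eta)=\sum_k\Phi(u_k,\eta)$, which is psh on $D^\rho\times G$. Then $A^2(D^\rho,\mathrm e^{-\psi_\eta})=H_\eta^{\widehat\otimes\rho}$. Its antisymmetric part corresponds to $\widehat{\bigwedge}^\rho H_\eta$, and the orthogonal projection onto it does not increase norms. The operator
\[
 \mathcal D_{\boldsymbol\alpha}=\sum_{\sigma}\operatorname{sgn}(\sigma)\prod_k D_{u_k}^{\alpha_{\sigma(k)}}
\]
is a finite constant-coefficient combination of the $D_u^\beta$, as required. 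Evaluating at the diagonal point $u=(x,\dots,x)$, the kernel $K^{\psi_\eta}_{\mathcal D_{\boldsymbol\alpha},V}(x,\dots,x)$ is the squared norm of the functional $\mathcal D_{\boldsymbol\alpha}|_{(x,\dots,x)}$ on $H_\eta^{\widehat\otimes\rho}$. This functional equals $\sqrt{\rho!}$ times $\ell_{\alpha_1,\eta}(x)\wedge\cdots\wedge\ell_{\alpha_\rho,\eta}(x)$, regarded as an element of $(H_\eta^{\widehat\otimes\rho})^*$. Hence
\[
 \|F_{\boldsymbol\alpha,\eta}(x)\|^2=c_\rho K^{\psi_\eta}_{\mathcal D_{\boldsymbol\alpha},D^\rho}(x,\dots,x)
\]
for a universal constant $c_\rho$. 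Checking this constant and the identification is the main bookkeeping step.

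By \cref{thm:fixed-functional}, each $\log K$ is psh or $\equiv-\infty$ on $D^\rho\times G$. Its restriction to the diagonal $\{(x,\dots,x)\}\times G$ is again psh or $\equiv-\infty$. A logarithm of a finite sum of exponentials of such functions is psh or $\equiv-\infty$, which gives the first assertion.

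Applying \cref{thm:fixed-functional} directly requires $D^\rho$ to be bounded pseudoconvex; it is a product of polydiscs, so this holds. One point needs attention: the theorem is stated with $\psi\in\PSH$, while $\Phi_\eta$ may be $\equiv-\infty$ on some fibers. The convention $K\equiv0$ handles those fibers. I expect this identification of the wedge norm with a Bergman kernel for a product weight to be the main obstacle, in particular its compatibility with the completed tensor products.

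\textbf{Threshold \eqref{eq:detector-threshold}.} $U_{N,\rho}(x,\eta)=-\infty$ if and only if every $\rho$-tuple $\ell_{\alpha_1,\eta}(x),\dots,\ell_{\alpha_\rho,\eta}(x)$ is linearly dependent. This is the statement that the span of the functionals $\{\ell_{\alpha,\eta}(x)\}_{|\alpha|\le N}$ has dimension $<\rho$. These functionals are the components of $j^N_x\colon H_\eta\to\Jet^N_x$ in the frame $(D^\alpha_x)$. Therefore the dimension of their span equals $\operatorname{rk}j^N_x=\dim E^I_N(x,\eta)$ by \cref{lem:jet-image}. Intersecting with $\Omega_{N,\rho}$ gives \eqref{eq:open-rank-defect}.

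\textbf{Fiberwise statements.} For fixed $\eta$, holomorphy of $x\mapsto\ell_{\alpha,\eta}(x)\in H_\eta^*$ (antilinear conventions aside) follows from differentiating under the Riesz representative. Concretely, $x\mapsto D^\alpha_xf$ is holomorphic, and the Cauchy estimates via \eqref{eq:weighted-unweighted} give local uniform bounds, so weak holomorphy implies strong holomorphy by \cite[Chapter~3]{Rud91}. Wedge products and direct sums of holomorphic maps remain holomorphic, so $F_{N,\rho,\eta}$ is holomorphic on $D$.

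For the polar locus, by \eqref{eq:detector-threshold} it equals $\{x:\operatorname{rk}j^N_x|_{H_\eta}<\rho\}$. Choose finitely many $f_1,\dots,f_p\in H_\eta$ spanning a subspace $S$ on which the generic rank is attained. Then $\{x:\operatorname{rk}j_x^N|_{H_\eta}<\rho\}=\bigcap_S\{x:\operatorname{rk}j_x^N|_S<\rho\}$, where each set is the zero set of the $\rho\times\rho$ minors of a holomorphic matrix. An intersection of analytic sets is analytic by Noetherianity of germs (local stabilization), so the polar locus is analytic in $D$.
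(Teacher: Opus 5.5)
Your proposal is correct. For plurisubharmonicity, the threshold \eqref{eq:detector-threshold} and the holomorphy of $F_{N,\rho,\eta}$, you follow the paper's argument essentially step by step. That is: the product domain $D^\rho$ with weight $\sum_k\Phi(u_k,\eta)$, the antisymmetrized operator $\mathcal D_{\boldsymbol\alpha}$, the identity $K^{\psi_\eta}_{\mathcal D_{\boldsymbol\alpha},D^\rho}(x,\dots,x)=\rho!\,\|F_{\boldsymbol\alpha,\eta}(x)\|^2$ (so $c_\rho=1/\rho!$), restriction to the diagonal, and \cref{lem:jet-image} for the rank count. The Cauchy--Binet description and the projection onto the antisymmetric part are not needed.

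The genuine difference is in the analyticity of the fiberwise polar locus. The paper uses coherence of $I_\eta$. Near $x_0$ it takes local generators $f_1,\dots,f_c$ of $I_\eta$, whose moving-center jets $j^N_x((z-x)^\gamma f_i)$ span $E^I_N(x,\eta)$ by the Taylor-polynomial argument. Via \eqref{eq:detector-threshold}, the polar locus is then locally the degeneracy locus of a single finite holomorphic matrix. You instead write it as
\[
 \bigcap_{f_1,\dots,f_\rho\in H_\eta}\ \bigcap_{\boldsymbol\alpha\in\mathfrak T_{N,\rho}}\bigl\{x\in D:\det\bigl(D^{\alpha_i}_xf_j\bigr)_{i,j}=0\bigr\}
\]
and appeal to the fact that an arbitrary intersection of analytic subsets is analytic.

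This works and is more general. It needs neither coherence of $I_\eta$ nor \cref{lem:jet-image}, because the polar locus equals $\{x:\rk j^N_x|_{H_\eta}<\rho\}$ directly from the wedge characterization. In effect, it shows that the zero set of any holomorphic Hilbert-space-valued map on $D$ is analytic.

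The cost is that the intersection statement is a genuine theorem, and your justification undersells it. Noetherianity of $\mathcal O_{D,x_0}$ only gives a finite subfamily whose intersection has minimal germ at $x_0$. Each further member then contains that finite intersection only on a neighborhood that may depend on the member. Getting a uniform neighborhood takes an extra step, e.g.\ the identity principle on irreducible representatives of the components of the minimal germ. So cite the theorem as such rather than attributing it to germ Noetherianity alone. The paper's route avoids this issue and gives explicit finite local equations.

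Finally, your sentence about choosing $S$ on which the generic rank is attained plays no role in the argument and can be dropped.
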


\begin{proof}
Fix $\rho\ge1$ and $\boldsymbol\alpha=(\alpha_1,\ldots,\alpha_\rho)
\in\mathfrak T_{N,\rho}$. On $D^\rho$, consider the operator
\[
\mathcal D_{\boldsymbol\alpha}
 \coloneqq \sum_{\sigma\in\mathfrak S_\rho}\operatorname{sgn}(\sigma)
 D_{z_1}^{\alpha_{\sigma(1)}}\cdots
 D_{z_\rho}^{\alpha_{\sigma(\rho)}}.
\]
Put
\[
 \psi(z_1,\ldots,z_\rho,\eta)
 \coloneqq\sum_{j=1}^\rho\Phi(z_j,\eta).
\]
\Cref{thm:fixed-functional} applies with $q=m\rho$, $s=n$, $V=D^\rho$, the fixed base polydisc $G$, and $\mathcal D=\mathcal D_{\boldsymbol\alpha}$. Multiplication of elementary tensors gives canonical unitary identifications
\[
 H_\eta^{\widehat\otimes\rho}
 \simeq A^2\!\left(D^\rho,\mathrm{e}^{-\sum_j\Phi_\eta(z_j)}\right),\qquad (H_\eta^*)^{\widehat\otimes\rho}
 \simeq (H_\eta^{\widehat\otimes\rho})^*,
\]
see~\cite[proof of Theorem~15.2(a)]{Dem12}. By density of elementary tensors, computing the functional $\mathcal D_{\boldsymbol\alpha}$ and using the normalized wedge convention gives the diagonal identity
\[
 K_{\mathcal D_{\boldsymbol\alpha},D^\rho}^{\psi_\eta}(x,\ldots,x)
 =\rho!\,\|F_{\boldsymbol\alpha,\eta}(x)\|^2.
\]
Indeed, under the preceding unitary identifications, evaluation of
$\mathcal D_{\boldsymbol\alpha}$ at $(x,\ldots,x)$ is the functional
$\sqrt{\rho!}\,F_{\boldsymbol\alpha,\eta}(x)$. Let
\[
 \delta\colon D\times G\longrightarrow D^\rho\times G,\qquad
 \delta(x,\eta)\coloneqq(x,\ldots,x,\eta).
\]
Thus, by~\cref{thm:fixed-functional},
\[
 u_{\boldsymbol\alpha}(x,\eta)
 \coloneqq\log \left\|F_{\boldsymbol\alpha,\eta}(x)\right\|^2
 =\left(\log K_{\mathcal D_{\boldsymbol\alpha},D^\rho}^{\psi_\eta}\right) \circ\delta(x,\eta)-\log\rho! \in\PSH(D\times G)\cup\{-\infty\}.
\]
Therefore so is
\[
 U_{N,\rho}=\log\left(\sum_{\boldsymbol\alpha\in\mathfrak T_{N,\rho}} \mathrm{e}^{u_{\boldsymbol\alpha}}\right).
\]

\Cref{lem:jet-image} gives
\[
 \dim_{\mathbb C}\operatorname{span}\left\{\ell_{\alpha,\eta}(x):|\alpha|\le N \right\}
 =\rk\left(j_x^N|_{H_\eta}\right)=\dim_{\mathbb C}E_N^I(x,\eta).
\]
Now $F_{N,\rho,\eta}(x)=0$ exactly when $\operatorname{span}\{\ell_{\alpha,\eta}(x):|\alpha|\le N\}$ has dimension less than $\rho$. This proves~\eqref{eq:detector-threshold}; the definition of $\Omega_{N,\rho}$ gives~\eqref{eq:open-rank-defect}.

Fix $\eta\in G$. For every $f\in H_\eta$, the scalar function $x\mapsto\ell_{\alpha,\eta}(x)(f)=D_x^\alpha f$ is holomorphic, so $x\mapsto\ell_{\alpha,\eta}(x)$ is holomorphic. It follows that $F_{N,\rho,\eta}$ is holomorphic.

It remains to show that $\{U_{N,\rho}(\cdot,\eta)=-\infty\}$ is analytic. Fix $x_0\in D$. By coherence, on a neighborhood $O_{x_0}\ni x_0$ choose generators $f_1,\ldots,f_c$ of $I_\eta$. The moving-center vectors
\[
 j_x^N\Bigl((z-x)^\gamma f_i(z)\Bigr), \qquad 1\le i\le c,\quad |\gamma|\le N,
\]
form the columns of a finite matrix holomorphic in $x\in O_{x_0}$. By the same Taylor-polynomial argument as in~\eqref{eq:J-jet-image}, the image of this matrix is $E_N^I(x,\eta)$. Hence~\eqref{eq:detector-threshold} identifies $\{U_{N,\rho}(\cdot,\eta)=-\infty\}\cap O_{x_0}$ with the locus where this matrix has rank $<\rho$. It is analytic. Such neighborhoods $O_{x_0}$ cover $D$.
\end{proof}

\subsection{Proof of the main theorem}
We now assemble the preceding rank detectors to prove \cref{thm:main}.

\begin{proof}[Proof of \cref{thm:main}]
Krull's intersection theorem gives
\[
\bigcap_{r\ge1}(I_{\eta,x}+\mathfrak m_x^r)=I_{\eta,x}
\]
and hence
\begin{equation}\label{eq:Krull}
 I_{\eta,x}\ne J_{\eta,x} \quad\Longleftrightarrow\quad E_N^I(x,\eta)\ne E_N^J(x,\eta) \text{ for some }N\in\mathbb N.
\end{equation}

For each $N\in\mathbb N$ and $1\le\rho\le\nu_N$, choose a countable family $\mathcal Q_{N,\rho}$ of product polydiscs
\[
 Q=W_Q\times G_Q\Subset\Omega_{N,\rho}
\]
that cover $\Omega_{N,\rho}$.

Let $\operatorname{pr}\colon D\times G\to G$ be the base projection and let
\begin{equation}\label{eq:BDG}
 \mathcal B(D,G)
 \coloneqq \left\{\eta\in G:I_{\eta,x}\ne J_{\eta,x} \text{ for some }x\in D \right\}.
\end{equation}
By~\eqref{eq:Krull} and the inclusion
$E_N^I(x,\eta)\subseteq E_N^J(x,\eta)$, a stalk difference occurs exactly
when, for some $N$ and $\rho$,
\[
 \dim_{\mathbb C}E_N^I(x,\eta)<\rho
 \leq\dim_{\mathbb C}E_N^J(x,\eta).
\]
Hence \eqref{eq:open-rank-defect} gives
\begin{equation}\label{eq:projected-incidence}
 \mathcal B(D,G)
 =\bigcup_{N\in\mathbb N}\ \bigcup_{\rho=1}^{\nu_N}\
   \bigcup_{Q\in\mathcal Q_{N,\rho}}
   \operatorname{pr}
   \Bigl(Q\cap\{U_{N,\rho}=-\infty\}\Bigr).
\end{equation}

Fix $N$, $\rho$, and $Q=W_Q\times G_Q\in\mathcal Q_{N,\rho}$. Choose a
product polydisc
\[
 Q'=W'_Q\times G'_Q
 \quad\text{with}\quad
 Q\Subset Q'\Subset\Omega_{N,\rho}.
\]
On $Q'$, define
\[
 \mathcal H_\eta\coloneqq \mathcal H_{N,\rho,\eta},\qquad
 F_\eta(w)\coloneqq F_{N,\rho,\eta}(w),\qquad
 U(w,\eta)\coloneqq U_{N,\rho}(w,\eta).
\]
Then
\begin{equation}\label{eq:pulled-detector}
  \begin{split}
 U\in\PSH\left(W'_Q\times G'_Q\right)\cup \{-\infty\},\qquad
 F_\eta\colon W'_Q\to\mathcal H_\eta\text{ is holomorphic},\\
 U(w,\eta)=\log\|F_\eta(w)\|_{\mathcal H_\eta}^2.
  \end{split}
\end{equation}
By~\cref{prop:detector}, for every $\eta\in G_Q$, the following set is
analytic in $W_Q$:
\[
 Z_\eta\coloneqq F_\eta^{-1}(0)\cap W_Q
 =\{w\in W_Q:U(w,\eta)=-\infty\}.
\]
Define the corresponding parameter locus by
\[
 \mathcal P_U\coloneqq \{\eta\in G_Q:Z_\eta\ne\varnothing\}.
\]
Put $G_{\mathrm{good}}\coloneqq G_Q\cap G_{\mathrm{ae}}$. Recall that $G_{\mathrm{ae}}$ is defined in \eqref{eq:ae-good}.
The set $G_{\mathrm{good}}$ is conull in $G_Q$ by~\eqref{eq:ae-good}, and the inclusion $W_Q\times G_Q\subseteq\Omega_{N,\rho}$ together with
$E_N^I(w,\eta)=E_N^J(w,\eta)$ for $w\in W_Q$ and
$\eta\in G_{\mathrm{good}}$ gives
\[
 Z_\eta=\varnothing\qquad(\eta\in G_{\mathrm{good}}).
\]
Consequently, $U$ is finite on $W_Q\times G_{\mathrm{good}}$, so the psh alternative in \eqref{eq:pulled-detector} holds. Applying \cref{cor:projected-zero-set} to $W_Q\Subset W'_Q$ and $G_Q\Subset G'_Q$ now shows that $\mathcal P_U$ is pluripolar. It follows that $\mathcal B(D,G)$ is pluripolar by \eqref{eq:projected-incidence}.

Choose polydiscs, indexed by $\ell\in\mathbb N$, such that
\[
 D_\ell\Subset\Delta_z^m,\qquad G_\ell\Subset\Delta_\eta^n,
\]
and the sets $D_\ell\times G_\ell$ cover $X$. The set
$P\coloneqq\bigcup_\ell \mathcal B(D_\ell,G_\ell)$ is pluripolar. If
$\eta\notin P$ and $x\in\Delta_z^m$, choose $\ell$ with
$(x,\eta)\in D_\ell\times G_\ell$. Then
$\eta\notin\mathcal B(D_\ell,G_\ell)$, so
$I_{\eta,x}=J_{\eta,x}$. Thus the two ideal sheaves agree on $X_\eta$.
\end{proof}

\section{Analytic Bertini theorem for holomorphic maps}
\label{sec:manifold-bertini}

Complex manifolds are assumed $\sigma$-compact. Recall that a subset $P$ of a complex manifold $Y$ is \emph{pluripolar}\footnote{Some authors prefer to say \emph{locally pluripolar}.} if for any $y\in Y$, there is an open neighborhood $U$ of $y$ and a psh function $\varphi$ on $U$ such that $\varphi|_{P\cap U}\equiv -\infty$.
Recall that a subset $A$ of a complex manifold $Y$ is \emph{negligible} if $A$ is contained in a countable union of locally closed complex submanifolds of $Y$ with empty interiors.

We first recall a well-known generic smoothness theorem:
\begin{theorem}\label{thm:analytic-generic-smoothness}
Let $f\colon Y\to Z$ be a holomorphic map between complex manifolds, and let
$C_f\subseteq Y$ be the non-smooth locus of $f$. Then $f(C_f)$ is negligible.
\end{theorem}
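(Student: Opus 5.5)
The plan is to stratify the critical locus $C_f$ by rank and apply Sard's theorem locally; the content of the claim is only that the image lies in countably many nowhere dense submanifolds. Since $Y$ is $\sigma$-compact, it suffices to work locally on $Y$. For $k=0,1,\ldots,\dim Z-1$, set $C_k\coloneqq\{y\in Y:\rk df_y=k\}$, taken on each connected component of $Y$ separately. Note that $C_f$ is the union of those $C_k$ with $k<\dim Z$, intersected with the components where $f$ fails to be a submersion. Each set $\{\rk df\le k\}$ is analytic, so $C_k$ is locally closed and analytic.

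The key step is to decompose further. Each $C_k$ is a countable union of locally closed connected complex submanifolds $S$, obtained by taking the regular loci of the analytic sets $\{\rk df\le k\}$ and then iterating on their singular loci. This uses only finite-dimensional stratification, which is countable by $\sigma$-compactness.

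On each $S$, the restriction $f|_S$ has differential of rank at most $k$. One then applies the constant rank theorem on the open dense subset of $S$ where $\rk d(f|_S)$ is maximal, say equal to $k'\le k<\dim Z$. Near each point of that subset, $f|_S$ factors locally through a submersion onto a locally closed $k'$-dimensional submanifold of $Z$. That image has empty interior because $k'<\dim Z$.

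The lower-rank remainder of $S$ is again analytic in $S$ and of smaller dimension, so induction on $\dim S$ handles it. Countably many charts cover everything, so $f(C_f)$ lies in a countable union of locally closed submanifolds of positive codimension, i.e.\ it is negligible.

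The main obstacle is bookkeeping: the constant-rank images must really be locally closed submanifolds. To ensure this, I shrink the charts so that each image is an embedded piece, as in the rank theorem normal form $(x_1,\ldots,x_{k'},0,\ldots,0)$. I also check that the induction terminates, which it does because dimension strictly decreases.
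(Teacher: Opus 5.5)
Your argument is correct, but it takes a different route from the paper. The paper does not stratify anything. It notes that $C_f=\{\rk\mathrm df<\dim Z\}$ is a closed analytic set. It observes that no local holomorphic section $\sigma$ of $f$ can have image in $C_f$, since $\mathrm df\circ\mathrm d\sigma=\mathrm{id}$ forces full rank. It then invokes \cite[Lemma~V.4.4]{BS76}, a general criterion under which the image of an analytic set containing no local sections is negligible.

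You instead prove the needed special case by hand. You stratify $C_f$ into countably many connected locally closed submanifolds $S$ and use $\rk\mathrm d(f|_S)\le\rk\mathrm df<\dim Z$. On the open dense maximal-rank locus you apply the constant rank theorem, so the local images are embedded pieces of dimension $k'<\dim Z$. You then induct on $\dim S$ for the proper analytic remainder. This is essentially the mechanism behind such lemmas.

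Your version is self-contained and shows exactly where the rank deficiency enters, at the cost of the stratification bookkeeping. The paper's version is a two-line reduction to a more general statement, valid for arbitrary analytic sets without local sections, but it depends on an external reference.

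Three small remarks on your write-up:
\begin{itemize}
\item You never actually use Sard's theorem; the constant rank theorem plus induction does all the work.
\item Splitting $C_f$ into the $C_k$ is unnecessary. Stratifying $C_f$ itself already gives $\rk\mathrm d(f|_S)<\dim Z$ on every stratum.
\item The phrase about intersecting with ``the components where $f$ fails to be a submersion'' should simply read $C_f=\{y:\rk\mathrm df_y<\dim_{f(y)}Z\}$.
\end{itemize}
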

In particular, $f(C_f)$ is pluripolar.

\begin{proof}
We may assume that $Z$ is connected. Put $n=\dim Z$. Then
\[
 C_f=\{y\in Y:\rk_{\mathbb C}\mathrm d f_y<n\}.
\]
In particular, it is a closed analytic subset; give it the reduced
structure.

No local holomorphic section of $f$ has image contained in $C_f$. Indeed, if
$\sigma$ were such a section, then
\[
 \mathrm d f_{\sigma(z)}\circ\mathrm d\sigma_z=\mathrm{id},
\]
contrary to the definition of $C_f$. Hence \cite[Lemma~V.4.4]{BS76} shows that $f(C_f)$ is negligible.
\end{proof}

\begin{proof}[Proof of \Cref{cor:manifold-bertini}]
Let $C_f$ be the locus in \cref{thm:analytic-generic-smoothness}.
If $z\notin f(C_f)$, then $Y_z\subseteq Y\setminus C_f$, so
$\mathrm df$ is surjective along $Y_z$ and the fiber is a complex
submanifold.

Any point in $Y\setminus C_f$ admits an open neighborhood on which $f$ is isomorphic to the projection of a product chart.
Choose a countable family of such product charts covering $Y\setminus C_f$. Applying \cref{thm:main} on each chart gives a pluripolar exceptional subset of its base coordinate neighborhood. Taking the union with $f(C_f)$ gives the required pluripolar exceptional locus.
\end{proof}

\clearpage
\printbibliography
Mingchen Xia, \textsc{Institute of Geometry and Physics, USTC}\par\nopagebreak
  \textit{Email address}, \texttt{xiamingchen2008@gmail.com}\par\nopagebreak
  \textit{Homepage}, \url{https://mingchenxia.github.io/}.

\end{document}